\newcommand{\Rr}{\mathcal{R}}
\newcommand{\Ii}{\mathcal{I}}
\newcommand{\spann}{\mathrm{span}}
\newcommand{\Sym}{\mathrm{Sym}}
\newcommand{\End}{\mathrm{End}}
\newcommand{\Ort}[1]{\mathrm{O}(#1)}
\newcommand{\GL}[1]{\mathrm{GL}({#1})}
\newcommand{\R}{\mathbb{R}}
\newcommand{\Z}{\mathbb{Z}}
\newcommand{\N}{\mathbb{N}}
\newcommand{\CC}{\mathbb{C}}
\newcommand{\RR}{\mathbb{R}}
\pgfplotsset{compat=1.18} 
\def\MR#1{\href{http://www.ams.org/mathscinet-getitem?mr=#1}{MR#1}}
\def\arXiv#1{arXiv:\href{http://arXiv.org/abs/#1}{#1}}
\newtheorem{theorem}{Theorem}[section]
\newtheorem{proposition}[theorem]{Proposition}
\newtheorem{lemma}[theorem]{Lemma}
\theoremstyle{remark}
\theoremstyle{definition}
\numberwithin{equation}{section}
\numberwithin{table}{section}
\numberwithin{figure}{section}
\title{Optimality and uniqueness of the $D_4$ root system}
\author{David de Laat}
\address{D.\ de Laat, Delft Institute of Applied Mathematics\\
Delft University of Technology\\ Delft, The Netherlands} \email{d.delaat@tudelft.nl}
\author{Nando Leijenhorst}
\address{N.\ M.\ Leijenhorst, Delft Institute of Applied Mathematics\\
Delft University of Technology\\ Delft, The Netherlands} \email{n.m.leijenhorst@tudelft.nl}
\author{Willem de Muinck Keizer}
\address{W.\ H.\ H.\ de Muinck Keizer, Delft Institute of Applied Mathematics\\
Delft University of Technology\\ Delft, The Netherlands} \email{w.h.h.demuinckkeizer@tudelft.nl}
\date{May 27, 2024} 
\begin{document}

 \begin{abstract}
     We prove that the $D_4$ root system (the set of vertices of the regular $24$-cell) is the unique optimal kissing configuration in $\R^4$, and is an optimal spherical code. For this, we use semidefinite programming to compute an exact optimal solution to the second level of the Lasserre hierarchy. We also improve the upper bound for the kissing number problem in $\R^6$ to $77$.
 \end{abstract}

\subjclass[2020]{90C22, 52C17}

\maketitle

\tableofcontents

\section{Introduction}

A kissing configuration in dimension $n$ is a collection of nonoverlapping, equal-size spheres in $\R^n$ that touch (or ``kiss'') a central sphere of the same size. We will assume the spheres have unit radius, and we identify a kissing configuration with the set $C$ of contact points with the central sphere. Such a set $C$ is a spherical code with minimal angle at least $\pi/3$. The kissing number $k(n)$ in dimension $n$ is the maximum size of such a set $C$.

In dimension four, a kissing configuration is given by the $D_4$ root system.
This root system may be constructed as the set of all $24$ vectors in $\R^4$ with integer coordinates and length $\sqrt{2}$, referred to as the roots. In this paper we normalize the roots to have unit length.
Viewed geometrically, the roots form the vertices of the $24$-cell, which is one of the six regular polytopes in dimension four. 
The possible inner products between distinct roots are $0$, $\pm 1/2$, and $-1$.
Hence $D_4$ is a kissing configuration in dimension 4 of size $24$; that is, $k(4) \geq 24$. 
In 2008, Musin showed $k(4) = 24$; the $D_4$ root system is an optimal kissing configuration \cite{musin08}.

In this paper, we show it is unique.
More precisely, we show that the $D_4$ root system is the only optimal kissing configuration in dimension four up to isometry. This implies it satisfies the stronger geometric condition of being an optimal spherical code: it minimizes 
\[
t_{\mathrm{max}}(C) = \max_{\substack{x, y \in C \\ x \neq y}} \langle x, y \rangle
\]
over all sets $C$ consisting of $24$ points in the unit sphere $S^3 = \{ x \in \R^4 \mid \langle x, x \rangle = 1\}$. This is in contrast with the result by Cohn, Conway, Elkies, and Kumar \cite{MR2367321} that the $D_4$ root system is not universally optimal, meaning that there exists an absolutely monotonic function $f$ (a smooth function with all derivatives nonnegative on $[-1,1]$) for which $D_4$ does not minimize
\[
\sum_{\substack{x, y \in C \\ x \neq y}} f(\langle x, y \rangle)
\]
over all $C \subseteq S^3$ of size $24$. In their paper, they conjecture that no universally optimal spherical code of $24$ points exists in $S^3$. The combination of the $D_4$ root system being the unique optimal spherical code, but not a universally optimal spherical code, proves this conjecture. 

The kissing number problem has a rich history, going back to a discussion between Newton
and Gregory in 1694 on the correct value of $k(3)$, which was resolved in 1953 by Sch\"utte and Van der Waerden \cite{MR0053537}. Currently, the value of $k(n)$ is known for $n=1$, $2$, $3$, $4$, $8$, and $24$. For background on the kissing number problem, we refer to \cite{pfender04}.  

In 1973, Delsarte introduced the linear programming bound, which can be used to bound the sizes of codes over finite alphabets \cite{MR0384310}. Delsarte, Goethals, and Seidel adapted this to the sphere so that it can be used to compute upper bounds on $k(n)$ \cite{delsarte77}.  Remarkably, this bound is sharp in dimensions $8$ and $24$, where by a sharp bound we mean that the optimal objective value is exactly equal to the kissing number, without having to take the integer part. The optimal objective value is $240$ in dimension $8$ and $196560$ in dimension $24$, which coincides with the sizes of the kissing configurations obtained by taking the shortest nonzero vectors in the $E_8$ root lattice and the Leech lattice $\Lambda_{24}$ \cite{odlyzko79, MR0529659}. This proves optimality of those configurations, and since the bound is sharp, complementary slackness holds, which was used to prove uniqueness \cite{Bannai_Sloane_1981}. 

In dimension four, the Delsarte bound was used to show $k(4) \leq 25$, which was the first improvement over Coxeter's upper bound of $26$ from 1964 \cite{MR0530296,MR0164283}. In \cite{ArestovBabenko} it was shown that the Delsarte bound cannot be used to prove $k(4) = 24$, and Musin's optimality proof for the $D_4$ root system uses a strengthening of the Delsarte bound. However, this strengthening does not lead to a sharp bound.

The Delsarte bound is called a two-point bound since it considers constraints between pairs of points on the sphere. Bachoc and Vallentin developed the three-point semidefinite programming bound for spherical codes, adapted from Schrijver's three-point bound for binary codes  \cite{bachoc08,MR2236252}. The three-point bound recovers the optimality results in dimensions $3$ and $4$ and improves the best-known upper bound for the kissing number problem in many other dimensions. To compute the three-point bound it is first reduced to a finite-dimensional problem by truncating an inverse Fourier transform, and since its introduction in 2008, all improvements to upper bounds on $k(n)$ have come from increasing this truncation degree \cite{MR2676746,machado16,leijenhorst2023solving}. 

The numerical data (see \cite[Table 6.1]{leijenhorst2023solving} for the newest results), however, suggests that the three-point bound for the kissing number problem is not sharp in any dimension $3 \leq n \leq 24$ and any truncation degree, except for the cases $n=8$ and $n=24$ where the Delsarte bound is already sharp.
There has been considerable work on $k$-point bound generalizations of the three-point bound, but this has not yet resulted in sharp, or even improved, bounds for the kissing number problem (or spherical code problems in general) \cite{musin14,44bd101c840a4621baf53c7d8a6a5f9b,Musin2018,deLaat2021,musin2023semidefinite,bekker2023convergence}.

Over the last decades, the moment/sums-of-squares approach by Lasserre and Parrilo (see \cite{lasserre01,lasserre02,Par00}) has become an important tool in mathematical optimization and theoretical computer science. Applying the Lasserre hierarchy to the independent set problem in a finite graph gives a converging hierarchy of increasingly large semidefinite programs giving successively stronger upper bounds on the independence number. We can think of the kissing number problem as the independent set problem in the graph on the unit sphere $S^{n-1}$, where two distinct vertices $x, y \in S^{n-1}$ are adjacent if $\langle x, y\rangle  > 1/2$. In \cite{laat15}, De Laat and Vallentin generalized this hierarchy to infinite graphs such as these, giving a hierarchy of $2t$-point bounds, where $t$ is the level of the hierarchy. In principle, this solves the kissing number problem in any dimension, since this hierarchy converges in finitely many steps. In practice, computing the levels of this hierarchy beyond the first level (which reduces to the Delsarte linear programming bound) is challenging.

In this paper, we compute the second level of this hierarchy for spherical code problems. We show the second level of the hierarchy is sharp for the kissing number problem in dimension four (the upper bound is exactly $24$) by computing an exact optimal solution. We then use complementary slackness to extract a uniqueness proof for the $D_4$ root system from the optimal solution. 

This is the first time the second level of the Lasserre hierarchy has been computed for a spherical code problem and the first improvement over the three-point bounds for spherical codes. 
Previously, the second level of the Lasserre hierarchy has been computed for two problems on infinite graphs. Firstly, it has been computed for energy minimization on the two-dimensional sphere \cite{laat16}. The techniques used there, however, become too expensive when going to higher dimensional spheres or further truncation degree of the inverse Fourier transform, and computing a sharp bound for the kissing number problem in dimension four would be prohibitively expensive with the techniques from that paper.

In \cite{delaat2023lasserre}, De Laat, Machado, and De Muinck Keizer compute the second and third levels of the hierarchy for the equiangular lines problem with a fixed angle $\theta$. Here the corresponding graph on $S^{n-1}$ has an edge between distinct points $x$ and $y$ if $\langle x, y \rangle \neq \pm \cos \theta$.
Although this is an infinite graph, the quotient space $\Ii_t / \Ort{n}$, where $\Ii_t$ is the set of independent sets of size at most $t$ and $\Ort{n}$ is the orthogonal group, is finite. Here $\Ort{n}$ acts on $\Ii_t$ by $g \{x_1,\dots, x_k\} = \{g x_1, \dots, g x_k\}$. 
For the kissing number problem, the quotient space $\Ii_t / \Ort{n}$ is infinite.
Because of this, computing the hierarchy for the kissing number problem is more involved, and in this paper, we extend the techniques from \cite{delaat2023lasserre} to do this.

The $t$-th level of the hierarchy is the optimization problem
\begin{equation}\label{pr:las}
\begin{aligned}
& \text{minimize} & & K(\emptyset, \emptyset)\\
& \text{subject to} & & K \in \mathcal C(\Ii_t \times \Ii_t)_{\succeq 0},\\
& & & A_t K(Q) \leq -1_{\Ii_{=1}}(Q), \, Q \in \Ii_{2t} \setminus \{\emptyset\}.
\end{aligned}
\end{equation}
Here $\mathcal C(\Ii_t \times \Ii_t)_{\succeq 0}$ is the cone of continuous, positive kernels on $\Ii_t$, where $\Ii_t$ inherits its topology from $S^{n-1}$ (see \cite{laat15}), $1_{\Ii_{=1}}$ is the indicator function of the set $\Ii_{=1}$ of one-element subsets of $S^{n-1}$, and 
\[
A_t K(Q) = \sum_{\substack{J_1,J_2 \in \Ii_t\\  J_1 \cup J_2 = Q}} K(J_1,J_2).
\]
Any feasible solution $K$ provides an upper bound on $k(n)$; see the start of the proof of Lemma~\ref{lem:innerproductsincode} for the argument. Moreover, if $K$ is feasible, then the kernel
\[
(J_1, J_2) \mapsto \int_{\Ort{n}} K(\gamma J_1, \gamma J_2) \, d\gamma
\]
is also feasible and has the same objective value, from which it follows we may restrict to $\Ort{n}$-invariant kernels.

To reduce this to a finite-dimensional problem, we express such an $\Ort{n}$-invariant kernel $K$ in terms of its inverse Fourier transform and truncate the series. For each $\lambda \in \Z^t$ with $\lambda_1 \ge \ldots \ge \lambda_t \ge 0$, we define a unitary representation $\pi \colon \Ort{n} \to V$ and denote the space of continuous, $\Ort{n}$-equivariant maps from $\Ii_t$ to $V$ by
$
\mathrm{Hom}_{\Ort{n}}(\Ii_t, V).
$
We refer to $|\lambda| = \sum_i \lambda_i$ as the degree of $\pi$.
We will construct a family $\{ \psi_{\lambda,\ell}\}$ of elements in this space and define the matrix $Z_\lambda(J_1,J_2)$ by
\[
Z_\lambda(J_1,J_2)_{\ell_1,\ell_2} = \langle \psi_{\lambda, \ell_1}(J_1), \psi_{\lambda,\ell_2}(J_2) \rangle,
\]
where the inner product on $V$ is used. For this, we need an explicit description of the space of invariants $V^{\Ort{n-t}}$. We use the construction by Gross and Kunze \cite{gross1984finite} of irreducible representations $V$ induced by representations of $\GL{t}$, which have nontrivial space of invariants $V^{\Ort{n-t}}$.

For each $\lambda$, let $\widehat K_\lambda$ be a positive semidefinite matrix of the same size as $Z_\lambda$ with only finitely many nonzero entries. Then the kernel $K \colon \Ii_t \times \Ii_t \to \RR$ defined by
\[
K(J_1, J_2) = \sum_{|\lambda| \le d} \langle \widehat K_\lambda,  Z_\lambda(J_1,J_2) \rangle
\]
is continuous, positive, and $\Ort{n}$-invariant. With the right choice of representations and families of equivariant functions, these approximate all continuous, positive, $\Ort{n}$-invariant kernels.
This last statement will not be discussed in this paper, since it is not necessary for the main result.

We have two main technical contributions. In \cite{delaat2023lasserre}, the zonal matrices $Z_\lambda$ are constructed for the equiangular lines case, where the set of independent sets has finitely many orbits and where there are only finitely many pointwise constraints. In Section~\ref{sec:zonal}, we extend this to infinitely many orbits for the case $t=2$, and we give a rescaling so that the entries of $Z_\lambda(J_1, J_2)$ become polynomials in the inner products between the vectors in $J_1 \cup J_2$. This allows us to reduce \eqref{pr:las} to a finite-dimensional problem by truncating the inverse Fourier transform, and to write the constraints using sums-of-squares characterizations, which means we can use semidefinite programming to compute bounds. 

Our second technical contribution concerns the computation of the zonal matrices for $t=2$. In our approach of generating the zonal matrices via representations of $\Ort{n}$ induced by representations of $\mathrm{GL}(t)$, we identify additional symmetries under certain actions of $\Ort{t}$ and $\Ort{n-t}$, and we use this to significantly reduce the computations that need to be performed; see Section~\ref{sec:speedups}. To obtain a sharp bound for the kissing number problem in $\R^4$ we need the zonal matrices $Z_\lambda$ with $|\lambda| \le 14$, and for this these reductions are essential.

Cohn and Elkies \cite{MR1973059} gave a noncompact adaptation of the Delsarte linear programming bound, and conjectured it gives the optimal sphere packing density in dimensions $8$ and $24$. Note that for noncompact problems, such as the sphere packing problem, one needs a sharp bound to prove optimality. In \cite{MR3664816}, Viazovska proved the groundbreaking result that the $E_8$ root lattice gives an optimal sphere packing in $\R^8$ by constructing an optimal solution to the Cohn-Elkies bound, after which optimality of the Leech lattice $\Lambda_{24}$ was shown similarly in \cite{MR3664817}. Currently, the sphere packing problem has been solved in dimensions $1$, $2$, $3$, $8$, and $24$, where the proof for the three-dimensional case used a completely different approach \cite{hales2005proof}.

It is conjectured that the $D_4$ lattice gives the optimal sphere packing in dimension four, where optimality among lattice packings has been known since 1873 \cite{MR1509795}. A numerically sharp three-point bound for the lattice sphere packing problem in $\R^4$ has recently been computed in \cite{cohnlaatsalmon}, but a (numerically) sharp bound for the general sphere packing problem is not known in dimension four. Since we show the second level of the Lasserre hierarchy is sharp for the kissing number problem in dimension four (just as the Delsarte bound is sharp in dimension $8$ and $24$), one might expect a noncompact adaptation (see also \cite{cohnsalmon}) might be sharp for the sphere packing problem in dimension four (as is the Cohn-Elkies bound in dimensions $8$ and $24$). We therefore suspect the second level of the Lasserre hierarchy gives a viable approach to solving the sphere packing problem in dimension four.

In this paper, we focus on the four-dimensional case. The reason for this is that computing the zonal matrices and solving the semidefinite programs is computationally expensive, and we have only performed computations for $|\lambda| \leq 16$. In the same way as for the three-point bounds, this is the truncation degree around which the bounds start to improve on the Delsarte bound. For the four-dimensional case of the kissing number problem, this results in a sharp bound, but it seems that in most other dimensions the degree is not yet high enough to get improved bounds. For the six-dimensional case, we report a small improvement in the upper bound from $78$ to $77$, which is the first improvement since the introduction of the three-point bound.

The paper is organized as follows. In Section~\ref{sec:zonal}, we construct a system of equivariant functions for which the corresponding zonal matrices consist of polynomials in the inner products. In Section~\ref{sec:speedups}, we show how these zonal matrices can be computed efficiently. In Section~\ref{sec:sdp} we discuss the semidefinite programming formulation, and in Section~\ref{sec:applications} we discuss the applications.

\section{Equivariant functions and zonal matrices}\label{sec:zonal}

\subsection{Representations of the general linear group}
\label{sec:gl2reps}

We start by briefly recalling some facts about the representations of the general linear group, which may be found, e.g., in \cite[Chapter 15]{fulton91}.
The irreducible representations of $\GL{t}$ are indexed by their signature $\lambda = (\lambda_1, \dots, \lambda_t)$, which is a tuple of integers satisfying $\lambda_1 \geq \lambda_2 \geq \ldots \geq \lambda_t$. The polynomial, irreducible representations are those with~$\lambda_t \geq 0$.

Since we consider the second level of the Lasserre hierarchy, we will require an explicit description of the irreducible, polynomial representations of $\GL{t}$ for $t=2$.
They are given by 
\[
W = \Sym^{\lambda_2}(\wedge^2 U) \otimes \Sym^{m}(U), 
\]
where $U = \CC^2$ is the tautological representation that sends a matrix to itself with basis $e_1, e_2$, the signature $\lambda = (\lambda_1, \lambda_2)$ satisfies $\lambda_1 \geq \lambda_2 \geq 0$, and we define $m = \lambda_1 - \lambda_2$.
We denote the corresponding group homomorphism by $\rho \colon \GL{2} \to \mathrm{GL} (W) $.  
A basis of this representation is given by 
\[
	w_k = (e_1 \wedge e_2)^{\lambda_2} e_1^{m-k} e_2^{k},
\]
where $k = 0, 1, \dots, m$.
We give $W$ the inner product such that $\langle w_{k_1}, w_{k_2} \rangle = \delta_{k_1,k_2} $. 
With this choice, we have
\begin{align}\label{eq:rhomatrixcoeffs}
    &\langle w_{k_1}, \rho(A) w_{k_2} \rangle\\
 &\quad= \det(A)^{\lambda_2} \sum_{l = 0}^{m - k_1} \tbinom{m - k_2}{l} \tbinom{k_2}{m - k_1 - l} A_{11}^l A_{21}^{m - k_2 - l} A_{12}^{m - k_1 - l} A_{22}^{k_2 - (m - k_1 - l)}.\nonumber
\end{align}
For brevity, we shall use the notation $\rho(A)_{k_1, k_2} = \langle w_{k_1}, \rho(A) w_{k_2} \rangle$.
Let $c_j(k)$ denote the number of times $e_j$ occurs in the tensor $w_k$. 
Concretely, we have $c_1(k) = \lambda_2 + m - k$ and $c_2(k) = \lambda_2 + k$.
For a diagonal matrix $D$, we have
\[
	\rho(D) w_k = D_{11}^{c_1(k)} D_{22}^{c_2(k)} w_k.
\]
We will occasionally refer to the representation as $\rho_{\lambda}$ when it is convenient to make the dependence on $\lambda$ explicit.

For later use, we also record here a formula for the differential $d\rho$ at the identity $I$ evaluated at
\[
X = \begin{bmatrix}
0 & 1 \\
-1 & 0
\end{bmatrix}.
\] 
Using the product rule (see, e.g., \cite[Chapter 8]{fulton91}), we obtain 
\[
    d\rho(X) w_{k_2} = -(m - k_2)w_{k_2 + 1} + k_2 w_{k_2 - 1}
\]
and hence $d \rho(X)_{k_1, k_2} = -(m - k_2)\delta_{k_1 , k_2 + 1} + k_2 \delta_{k_1 , k_2 - 1}$. 

\subsection{Invariants of the orthogonal group}
\label{sec:invariantso}

Let $n \geq 2t$.
Denote by $\Ort{n, K}$ the group of $n \times n$ matrices $g$ with entries in the field $K$  satisfying $g^{\sf T} g = I$.
We see the group $\Ort{n - t, K}$ as the subgroup of $\Ort{n, K}$ which fixes the first $t$ standard basis vectors.
We will denote $\Ort{n, \RR}$ by $\Ort{n}$.

Following Gross and Kunze \cite{gross1984finite}, we now define certain representations of $\Ort{n}$ induced by representations of $\GL{t}$.
Let $(\rho, W)$ be the polynomial, irreducible representation of $\GL{t}$ with signature $\lambda$.
Define the complex $t \times n$ matrix
\[
\omega = \begin{pmatrix} I_t & i I_t & 0 \end{pmatrix}
\]
and the $n \times t$ matrix
\[
\epsilon = \begin{pmatrix} I_t \\ 0 \end{pmatrix}.
\]
For each $w \in W$, define a function $f_w \colon \Ort{n, \CC} \to W$ by
\begin{equation} \label{eq:defoforthrep1}
    f_w (\gamma) = \rho(\omega \gamma \epsilon)w.
\end{equation}
Define the vector space of right translates of such functions by
\[
    V = \spann \left\{ R_{g} f_w \mid g \in \Ort{n,\CC},\, w \in W \right\},
\]
where $R_{g} f_w (\gamma) = f_w (\gamma g)$. 
This space is a representation of $\Ort{n, \CC}$ by right translation.
A representation of $\Ort{n}$ is obtained by restricting $\Ort{n, \CC}$ to $\Ort{n}$.
We shall refer to this representation of $\Ort{n}$ by $(\pi, V)$.

Let $\Psi \colon W \to V$ be the map sending $w$ to $f_w$,
and  consider the space of invariants
\[
V^{\Ort{n - t}} = \{ v \in V \, | \, \pi(h)v = v \textup{ for all } h \in \Ort{n - t} \}.
\]
Since $h \epsilon = \epsilon$ for $h \in \Ort{n - t}$, we have $\Psi(W) \subseteq V^{\Ort{n - t}}$. 

On $V$, we define the inner product
\[
\langle f_1, f_2 \rangle = \int_{\Ort{n}} \langle f_1(\gamma), f_2(\gamma) \rangle \, d\gamma.
\]
By standard properties of the Haar measure, this makes $V$ a unitary representation of $\Ort{n}$. 
It may be shown that with the inner product chosen in Section~\ref{sec:gl2reps}, the numbers
\[
    \langle \Psi(w_i), \pi(g) \Psi(w_j) \rangle = \int_{\Ort{n}} \langle \Psi(w_i)(\gamma), \Psi(w_j)(\gamma g) \rangle \, d\gamma
\]
are real; see \cite[Section 3]{delaat2023lasserre}. 

For this paper, it is only required that $V$ is a representation of the orthogonal group and $\Psi(W) \subseteq V^{\Ort{n - t}}$. 
However, it follows from the results in \cite{gross1984finite} that the above description is complete in the following sense.
For $n > 2t$, the representations of $\Ort{n}$ defined above are irreducible and we have equality $\Psi(W) = V^{\Ort{n - t}}$. 
Moreover, all irreducible representations of $\Ort{n}$ with nontrivial invariants under $\Ort{n - t}$ are of this form for a unique $\lambda$. 
For $n = 2t$, a complete characterization of the irreducible representations and invariants is given in \cite[Section 8]{gross1984finite}, and using this it may be shown that the description of kernels in our approach is also complete in the case $n = 2t$.
We defer the exact statement and verification to the upcoming PhD thesis of the third-named author.

\subsection{Equivariant functions}\label{sec:defandcont}

In this section, we define a family of $\Ort{n}$-equivariant functions from $\Ii_2$ to the representation $V$ as constructed in Section~\ref{sec:invariantso}. The definition of these functions depends on the choice of representatives of the orbits of $\Ii_2$ under the action of $\Ort{n}$. Let 
\begin{align*}
p_j(\{x,y\}) &= \langle x, y\rangle^j,\\
q_1(\{x,y\}) &= \sqrt{2 + 2\langle x, y\rangle},\\
q_2(\{x,y\}) &= \sqrt{2 - 2\langle x, y\rangle}.
\end{align*}
For the orbit $\Ii_{=0}$ the representative is $\emptyset$ and for the orbit $\Ort{n}J$ with $|J| \geq 1$ we choose the representative
\begin{equation}\label{eq:representatives}
	\left\{\left(\frac{q_1(J)}{2}, \frac{q_2(J)}{2}, 0, \ldots, 0\right), \left(\frac{q_1(J)}{2}, -\frac{q_2(J)}{2}, 0, \ldots, 0\right)\right\}.
\end{equation}
In particular, this means that the standard basis vector $e_1$ is the representative for the orbit $\Ii_{=1}$.

The equivariant functions will be indexed by so-called admissible tuples. If $i = 0$, we call the tuple $(\lambda, i, j, k)$ admissible if $\lambda = (0,0)$, $j = 0$, and $k = 0$.
If $i = 1$, we call the tuple admissible if $\lambda_2 = 0$, $j = 0$, and $k = 0$. Finally, if $i = 2$, we call the tuple admissible for any $\lambda_1 \ge \lambda_2 \ge 0$, $j \geq 0$, and $0 \leq k \leq \lambda_1-\lambda_2$ with $\lambda_2+k$ even.

For each admissible tuple $(\lambda, i, j, k)$, we now define the function
\[
\psi_{\lambda,(i,j,k)}(J) = \xi_{\lambda,i,j,k}(J) \pi(s(J)) \Psi(w_k),
\]
where
\[
\xi_{\lambda,i,j,k}(J) = \begin{cases} 1 & \text{if } i = |J| < 2,\\
p_j(J) q_1(J)^{c_1(k)} q_2(J)^{c_2(k)} & \text{if } i=|J| = 2,\\
0 & \text{otherwise}.\end{cases}
\]
Here $s \colon \Ii_2 \to \Ort{n} $ is a function such that $s(J)R = J$, where $R$ is the orbit representative of the orbit $\Ort{n}J$. To such a function $s$ we shall refer as a section. Once the orbit representatives are fixed, the construction of the functions does not depend on the choice of the section $s$. 

Let us give a brief motivation for these formulae. Firstly, the subscript $i$ indicates the connected component $\Ii_{=i}$ on which the equivariant function is not identically zero. 
The space $\Ii_{=i}$ is homeomorphic to a quotient of $\Ii_{=i}/\Ort{n} \times \Ort{n}/\Ort{n-i}$. For $i=2$, the first factor is homeomorphic to an interval and the second factor to a Stiefel manifold.
The function $p_j$ may be viewed as a function on the factor $\Ii_{=2}/\Ort{n}$ and $\pi(s(J)) \Psi(w_k)$ as a function on the factor $\Ort{n}/\Ort{n - 2}$. These functions are then multiplied to obtain functions on the whole space. The functions $q_1$ and $q_2$ serve two purposes. Namely, they will ensure that we have compatibility with the additional quotient concerning the endpoints of $\Ii_{=2}/\Ort{n}$, and that we obtain polynomial expressions.

\begin{lemma}\label{lem:equivariant}
For admissible $(\lambda,i,j,k)$, the function $\psi_{\lambda,(i,j,k)}$ is equivariant.
\end{lemma}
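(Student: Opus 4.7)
The plan is to reduce the equivariance identity to an invariance statement for $\Psi(w_k)$ under the set stabilizer of the chosen orbit representative, and then to verify that invariance case by case using the admissibility conditions. First I observe that $|gJ|=|J|$ and that $p_j,q_1,q_2$ depend only on inner products, so the scalar factor $\xi_{\lambda,i,j,k}$ is $\Ort{n}$-invariant; whenever $\xi_{\lambda,i,j,k}(J)=0$ the identity $\psi_{\lambda,(i,j,k)}(gJ)=\pi(g)\psi_{\lambda,(i,j,k)}(J)$ becomes $0=\pi(g)\cdot 0$, so I may restrict to $J$ with $\xi_{\lambda,i,j,k}(J)\neq 0$. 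On such $J$ the identity reduces to $\pi(s(gJ))\Psi(w_k)=\pi(g)\pi(s(J))\Psi(w_k)$, and setting $h:=s(J)^{-1}g^{-1}s(gJ)$ the defining property $s(\cdot)R=\cdot$ gives $hR=R$; so it suffices to check $\pi(h)\Psi(w_k)=\Psi(w_k)$ for every $h$ in the set stabilizer of the orbit representative $R$.

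Next I handle the three admissible values of $i$. For $i=0$, admissibility forces $\lambda=(0,0)$, so $W$ is one-dimensional and $\Psi(w_0)$ is the constant function $1$, which is $\Ort{n}$-invariant. For $i=1$, admissibility forces $\lambda_2=0$ and $k=0$; then $w_0=e_1^{\lambda_1}$ and the identity $\rho(A)e_1^{\lambda_1}=(Ae_1)^{\lambda_1}$ combined with $\epsilon e_1=e_1$ yields $f_{w_0}(\gamma)=(\omega\gamma e_1)^{\lambda_1}$, whose value depends only on $\gamma e_1$; hence $\Psi(w_0)$ is invariant under right multiplication by the entire set stabilizer of $\{e_1\}$.

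The core case is $i=2$. In the generic regime $q_1(J),q_2(J)>0$ the set stabilizer of $R$ decomposes as $\Ort{n-2}\sqcup\sigma\Ort{n-2}$, where $\sigma=\diag(1,-1,1,\ldots,1)$ is the involution swapping the two points of $R$: elements fixing both points fix $e_1$ and $e_2$ and therefore lie in $\Ort{n-2}$, while elements swapping the two points satisfy $he_1=e_1$ and $he_2=-e_2$ and differ from $\sigma$ by an element of $\Ort{n-2}$. Invariance of $\Psi(w_k)$ under $\Ort{n-2}$ is built into $\Psi(W)\subseteq V^{\Ort{n-2}}$, while the identity $\sigma\epsilon=\epsilon\,\diag(1,-1)$ together with the diagonal formula from Section~\ref{sec:gl2reps} gives
\[
f_{w_k}(\gamma\sigma)=\rho(\omega\gamma\epsilon\,\diag(1,-1))w_k=(-1)^{\lambda_2+k}f_{w_k}(\gamma),
\]
so the admissibility condition that $\lambda_2+k$ be even makes the sign trivial. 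The degenerate regime $q_1(J)=0$, with $R=\{e_2,-e_2\}$, has a larger stabilizer, but the factor $q_1^{c_1(k)}$ in $\xi$ kills $\psi_{\lambda,(i,j,k)}(J)$ unless $c_1(k)=0$, which forces $\lambda_2=0$ and $k=\lambda_1$; in that residual subcase $f_{w_k}(\gamma)=(\omega\gamma e_2)^{\lambda_1}$ and elements $h$ of the enlarged stabilizer act by $(\pm 1)^{\lambda_1}$, which the admissibility parity again turns into $+1$. The main obstacle I anticipate is correctly identifying this set stabilizer and spotting the involution $\sigma$; once this is set up, the parity computation matches admissibility precisely and the remaining verifications are direct.
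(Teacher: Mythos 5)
Your proof is correct and follows essentially the same route as the paper: reduce to showing that $\Psi(w_k)$ is fixed by the set stabilizer of the orbit representative, split by $i$, and in the $i=2$ case use the decomposition $\Ort{n-2}\sqcup\sigma\Ort{n-2}$ (the paper writes it as $S_2\times\Ort{n-2}$) together with the parity of $\lambda_2+k$, handling antipodal pairs via the vanishing of $q_1^{c_1(k)}$. Your antipodal subcase is if anything slightly more careful, since you explicitly account for stabilizer elements sending $e_2\mapsto -e_2$ through the sign $(\pm 1)^{\lambda_1}$, a step the paper elides by asserting the second columns of $\omega\gamma h\epsilon$ and $\omega\gamma\epsilon$ agree.
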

\begin{proof}
Since $\psi_{\lambda,(i,j,k)}$ is supported on $\Ii_{=i}$, and since the action of $\Ort{n}$ on $\Ii_2$ preserves the cardinality of the sets, we only need to show equivariance for the restriction of $\psi_{\lambda,(i,j,k)}$ to $\Ii_{=i}$. Let $J$ be an element in $\Ii_{=i}$ and let $R$ be the orbit representative of $\Ort{n}J$. 
For $g \in \Ort{n}$, we have $s(gJ) R = gJ$ and $gs(J) R = gJ$, so $s(gJ) = gs(J) h$ for some $h$ in the stabilizer subgroup $\mathrm{Stab}_{\Ort{n}}(R)$. Hence, 
\begin{align*}
\psi_{\lambda,(i,j,k)}(gJ) 
&= \xi_{\lambda,i,j,k}(gJ) \pi(s(gJ)) \Psi(w_k)\\
&= \xi_{\lambda,i,j,k}(J) \pi(gs(J)h) \Psi(w_k)\\ 
&= \xi_{\lambda,i,j,k}(J) \pi(g)\pi(s(J)) \pi(h) \Psi(w_k).
\end{align*}
We will complete the proof by showing that unless $\psi_{\lambda,(i,j,k)}(J)$ and $\psi_{\lambda,(i,j,k)}(gJ)$ are both zero, $\pi(h) \Psi(w_k) = \Psi(w_k)$, which shows 
\[
\psi_{\lambda,(i,j,k)}(gJ) = \pi(g)\psi_{\lambda,(i,j,k)}(J).
\]

For this, we consider the cases $i=0,1,2$ separately. The $i=0$ case is immediate since $\Ii_{=0}$ consists of a single element, and since $\lambda = 0$, $V$ is one dimensional. If $i=1$, then $k=0$, and the stabilizer subgroup of $\Ort{n}$ with respect to $R$ is $\Ort{n-1}$. By formula \eqref{eq:rhomatrixcoeffs}, the dependence of $\rho(\omega \gamma h \epsilon)w_0$ on $\omega \gamma h \epsilon$ is only in the first column, which is equal to the first column of $\omega \gamma \epsilon$, so 
\[
\pi(h) \Psi(w_0)(\gamma) = \rho(\omega \gamma h \epsilon)w_0 = \rho(\omega \gamma \epsilon)w_0 = \Psi(w_0)(\gamma).
\]

If $i=2$ and the points in $J$ are not antipodal, then the stabilizer subgroup of $\Ort{n}$ with respect to $R$ is $S_2 \times \Ort{n-2}$, where $S_2$ is the two-element group generated by the matrix $r$ which maps $e_2$ to $-e_2$ and fixes the orthogonal complement of $e_2$. By construction (see Section~\ref{sec:invariantso}), we have $\pi(h) \Psi(w_k) = \Psi(w_k)$ for $h \in \Ort{n-2}$. The matrix $\omega \gamma r \epsilon$ is the same as $\omega \gamma \epsilon$, except that the second column gets multiplied by $-1$. Since $\lambda_2 + k$ is even, it follows again from formula~\eqref{eq:rhomatrixcoeffs} that $ \rho(\omega \gamma r \epsilon)w_k =  \rho(\omega \gamma \epsilon)w_k$, and thus that 
$\pi(r) \Psi(w_k)(\gamma) = \Psi(w_k)(\gamma)$ holds.

If $i=2$ and the points in $J$ are antipodal, then the stabilizer subgroup is $\Ort{n-1}$ and $q_1(J) = 0$. If $c_1(k) > 0$, then $q_1(J)^{c_1(k)} = 0$, so both $\psi_{\lambda,(i,j,k)}(J)$ and $\psi_{\lambda,(i,j,k)}(gJ)$ are zero. If $c_1(k) = 0$, then $\lambda_2 = 0$ and $k = \lambda_1$, and according to \eqref{eq:rhomatrixcoeffs}, $\rho(\omega \gamma h \epsilon)w_k$ only depends on the second column of $\omega \gamma h \epsilon$, which is equal to the second column of $\omega \gamma \epsilon$, so 
\[
\pi(h) \Psi(w_k)(\gamma) = \rho(\omega \gamma h \epsilon)w_k =  \rho(\omega \gamma \epsilon)w_k = \Psi(w_k)(\gamma).\qedhere
\]
\end{proof}

\subsection{Zonal matrices}

We now define the zonal matrix $Z_\lambda$ by 
\[
Z_\lambda(J_1, J_2)_{(i_1,j_1,k_1), (i_2, j_2, k_2)} =
\langle \psi_{\lambda, (i_1, j_1, k_1)}(J_1), \psi_{\lambda, (i_2, j_2, k_2)}(J_2) \rangle,
\]
where the rows and columns range over all admissible tuples. It follows from equivariance of the function $\psi_{\lambda,(i,j,k)}$ and unitarity of the inner product that the zonal matrices are $\Ort{n}$ invariant.

In the remainder of this section, we use invariant theory to give a short argument showing that the entries of the zonal matrices are polynomials in the inner products between the vectors in $J_1 \cup J_2$. Note that this fact also follows from the direct construction in terms of inner products as given in Section~\ref{sec:speedups}.

\begin{lemma}\label{lem:zonal_to_polynomial3}
    Let $(\lambda, i, j, k)$ be admissible.
	For fixed $w \in W$, the expression
	\[
		\langle w, \psi_{\lambda, (i, j, k)}(\{x_1,\dots,x_i\})(\gamma) \rangle
	\]
	is a polynomial in the entries of the orthogonal matrix $\gamma$ and the vectors $x_1,\dots,x_i$.
\end{lemma}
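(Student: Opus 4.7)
The plan is to unpack the definition and apply the explicit matrix-coefficient formula~\eqref{eq:rhomatrixcoeffs}. Since $\pi$ acts by right translation, we have
\[
\psi_{\lambda,(i,j,k)}(J)(\gamma) = \xi_{\lambda,i,j,k}(J)\, \rho(\omega \gamma s(J) \epsilon)\, w_k,
\]
and pairing with $w = w_{k_1}$ reduces the claim to showing that $\xi_{\lambda,i,j,k}(J) \cdot \rho(\omega \gamma s(J)\epsilon)_{k_1, k}$ is polynomial in the entries of $\gamma$ and of the vectors $x_1,\dots,x_i$. I would proceed by a case analysis on $i \in \{0, 1, 2\}$, paralleling the split used in the proof of Lemma~\ref{lem:equivariant}.

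The cases $i=0$ and $i=1$ are short. For $i=0$, admissibility forces $\lambda = (0,0)$, so $\rho$ is trivial and the value is constant. For $i=1$, admissibility forces $k = 0$ and $\lambda_2 = 0$; inspection of \eqref{eq:rhomatrixcoeffs} with $k_2 = 0$ shows that $\rho(A)_{k_1, 0}$ depends only on the entries of the first column of $A$. Since $s(J) e_1 = x_1$, this column equals $\omega \gamma x_1$, linear in the entries of $\gamma$ and $x_1$, and $\xi \equiv 1$.

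The main case is $i = 2$. Writing $J = \{x_1, x_2\}$ and using the representative $\{r_1, r_2\}$ from \eqref{eq:representatives}, one checks that $r_1 + r_2 = q_1(J)\, e_1$ and $r_1 - r_2 = q_2(J)\, e_2$. Applying $s(J)$ and using $\{s(J) r_1, s(J) r_2\} = \{x_1, x_2\}$ yields
\[
s(J) e_1 = \frac{x_1 + x_2}{q_1(J)}, \qquad s(J) e_2 = \pm \frac{x_1 - x_2}{q_2(J)},
\]
so every entry of the first column of $A := \omega \gamma s(J) \epsilon$ equals $q_1(J)^{-1}$ times a polynomial in the entries of $\gamma, x_1, x_2$, and every entry of the second column equals $\pm q_2(J)^{-1}$ times such a polynomial. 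Counting exponents in \eqref{eq:rhomatrixcoeffs}, and accounting for the factor $\det(A)^{\lambda_2}$, the first column of $A$ enters $\rho(A)_{k_1,k}$ with total degree $(m - k) + \lambda_2 = c_1(k)$ and the second with total degree $k + \lambda_2 = c_2(k)$. Multiplying by $\xi_{\lambda,2,j,k}(J) = p_j(J)\, q_1(J)^{c_1(k)} q_2(J)^{c_2(k)}$ therefore cancels both denominators exactly, and $p_j(J) = \langle x_1, x_2 \rangle^j$ is itself polynomial.

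The step requiring the most care, and the only place where the admissibility condition actively enters, is the sign ambiguity in $s(J) e_2$: two valid sections can differ by the reflection fixing $e_1$ and sending $e_2 \mapsto -e_2$. In the final expression this sign is raised to the total power $c_2(k) = \lambda_2 + k$, so the resulting polynomial is unambiguous precisely when $\lambda_2 + k$ is even, which is exactly the admissibility condition on $k$ for $i = 2$. With that consistency check in place, the remainder of the argument is a routine exponent count that I would not spell out further here.
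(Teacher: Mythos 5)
Your proposal follows the same line of attack as the paper's proof: unpack $\psi$ in terms of $\rho(\omega\gamma s(J)\epsilon)$, handle $i=0,1$ by inspection of the matrix-coefficient formula, and for $i=2$ use the fact that rescaling the columns of the argument of $\rho$ by a diagonal matrix pulls out the factor $D_{11}^{c_1(k)}D_{22}^{c_2(k)}$, which is cancelled by the corresponding factor in $\xi$. Your exponent count is exactly equivalent to the paper's step of factoring out $\rho(\mathrm{diag}(1/q_1,1/q_2))$, and your treatment of the sign ambiguity is the same observation (about $\lambda_2+k$ being even) that the paper makes.

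There is, however, a gap in the $i=2$ case that the paper takes care to close. Your argument uses the description $s(J)e_1 = (x_1+x_2)/q_1(J)$, which is only defined when $q_1(J)\neq 0$, i.e.\ when $x_1\neq -x_2$, so it produces the polynomial identity only on the open set $\langle x_1,x_2\rangle\neq\pm 1$. But for the kissing problem the antipodal pair $J=\{x_1,-x_1\}$ lies in $\Ii_{=2}$, and at such a point the orbit representative degenerates (the stabilizer jumps from $S_2\times\Ort{n-2}$ to $\Ort{n-1}$, and the first column of $s(J)$ is no longer determined by $J$). You cannot simply invoke continuity of $\psi$ to extend the identity, because continuity of $\psi$ at those points is not established independently of this lemma; the paper instead verifies the identity $\psi(J)=\langle x_1,x_2\rangle^j\rho(\omega\gamma[x_1+x_2\ \ x_1-x_2])w_k$ directly in the antipodal case by substituting $s(J)\epsilon=[c\ \ x_1]$ for an arbitrary unit vector $c\perp x_1$ and checking that, whether $c_1(k)>0$ (where $\xi$ vanishes) or $c_1(k)=0$ (where the dependence on the ambiguous first column drops out), the two sides agree. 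That verification, and its analogue for $x_1=x_2$, is the part of the proof your write-up skips; it is the only genuinely non-routine step beyond the exponent count, and it is needed for the lemma to hold on all of $\Ii_{=2}$ rather than only generically.
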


\begin{proof}
    Given the choice of representatives, we have that for $J=\{x_1\}$, the first column of $s(J)$ is equal to $x_1$, for $J = \{x_1,x_2\}$ with $\langle x_1, x_2\rangle \neq \pm 1$, the first column of $s(J)$ is $(x_1+x_2)/q_1(J)$ and the second column is either $(x_1-x_2)/q_2(J)$ or $(x_2-x_1)/q_2(J)$, and for $J = \{x_1, -x_1\}$ the second column of $s(J)$ is either $x_1$ or $-x_1$. By the choice of admissible tuples, it will turn out that the resulting expressions do not depend on the sign of the second column.
    
	We prove the lemma for each $i$ separately.
    For $i = 0$, the expression is a constant.
    If $i=1$, then $\lambda_2=j=k=0$, and we have
    \[
        \langle w, \psi_{\lambda,(1,0,0)}(\{x_1\}) \rangle = \xi_{\lambda,1,0,0}(\{x_1\}) \langle w, \pi(s(\{x_1\})) \Psi(w_k)(\gamma) \rangle. 
    \]
    Here $\xi_{\lambda,1,0,0}(\{x_1\}) = 1$ and 
    \begin{align*}
        \langle w, \pi(s(\{x_1\})) \Psi(w_0)(\gamma) \rangle &= \langle w, \rho(\omega \gamma s(\{x_1\}) \epsilon) w_0 \rangle.
    \end{align*}
    From the expression \eqref{eq:rhomatrixcoeffs} for the matrix coefficients of $\rho$, it follows that the right-hand side is a polynomial in the entries in the first column of $\omega \gamma s(\{x_1\}) \epsilon$, which is a polynomial in the entries of $\gamma$ and $x_1$.
     
Now let $i = 2$ and set $J = \{x_1,x_2\}$. We will show that
\begin{equation}\label{eqtoshow}
\psi_{\lambda,(i,j,k)}(J) = \langle x_1, x_2\rangle^j \rho(\omega \gamma \mathopen{}\begin{bmatrix}x_1 + x_2 & x_1 - x_2 \end{bmatrix} )  w_k.
\end{equation}
For $\langle x_1, x_2\rangle \neq \pm 1$, we then have
\begin{align*}
\psi_{\lambda,(i,j,k)}(J) &= \xi_{\lambda,i,j,k}(J)\pi(s(J)) \Psi(w_k) \\
&= p_j(J) q_1(J)^{c_1(k)} q_2(J)^{c_2(k)} \rho\mathopen{}\left( \omega \gamma \mathopen{}\begin{bmatrix}\frac{x_1+x_2}{q_1(J)} & \frac{x_1-x_2}{q_2(J)}\end{bmatrix}\right)\mathclose{}w_k.
\end{align*}
Here we used that the expression does not depend on the sign of the second column since $\lambda_2 + k$ is even, i.e., we have 
\[
\rho\mathopen{}\left(\omega \gamma \mathopen{}\begin{bmatrix} u & v \end{bmatrix}\right)\mathclose{}w_k = \rho\mathopen{}\left(\omega \gamma \mathopen{}\begin{bmatrix} u & -v \end{bmatrix}\mathclose{}\right)\mathclose{}w_k
\]
for all orthonormal $u$ and $v$.
Since 
\[
\rho\mathopen{}\left( \omega \gamma \mathopen{}\begin{bmatrix}\frac{x_1+x_2}{q_1(J)} & \frac{x_1-x_2}{q_2(J)}\end{bmatrix}\right)\mathclose{} = 
\rho(\omega \gamma \mathopen{}\begin{bmatrix}x_1 + x_2 & x_1 - x_2 \end{bmatrix} ) \rho\mathopen{}\left(\mathopen{}\begin{bmatrix} 1/q_1(J) & 0 \\ 0 & 1/q_2(J) \end{bmatrix}\right)\mathclose{},
\]
it follows that identity \eqref{eqtoshow} holds whenever $\langle x_1, x_2\rangle \neq \pm 1$.

We will show \eqref{eqtoshow} also holds for the case $x_1 = -x_2$. We have
\begin{align*}
\psi_{\lambda,(i,j,k)}(J) &= \xi_{\lambda,i,j,k}(J)\pi(s(J)) \Psi(w_k) \\
&= p_j(J) q_1(J)^{c_1(k)} q_2(J)^{c_2(k)} \rho( \omega \gamma s(J) \epsilon)w_k.
\end{align*}
We may now substitute $s(J)\epsilon$ with $\begin{bmatrix} c & x_1 \end{bmatrix}$ for any unit vector $c$ orthogonal to $x_1$, to obtain
\begin{align*}
\psi_{\lambda,(i,j,k)}(J)
&= \langle x_1, x_2\rangle^j 0^{c_1(k)} 2^{c_2(k)} \rho\mathopen{}\left( \omega g \mathopen{}\begin{bmatrix} c & x_1 \end{bmatrix}\mathclose{}\right)\mathclose{} w_k\\
&= \langle x_1, x_2 \rangle^j \rho\mathopen{}\left( \omega \gamma \mathopen{}\begin{bmatrix} c & x_1 \end{bmatrix}\mathclose{}\right)\mathclose{}  \rho\mathopen{}\left(\mathopen{}\begin{bmatrix} 0 & 0 \\ 0 & 2 \end{bmatrix}\right)\mathclose{} w_k\\
&= \langle x_1, x_2\rangle^j \rho(\omega \gamma \mathopen{}\begin{bmatrix}x_1 + x_2 & x_1 - x_2 \end{bmatrix} )  w_k.
\end{align*}
A similar argument can be used to show \eqref{eqtoshow} holds for the case $x_1 =x_2$. 
Together this shows 
\[
\langle w, \psi_{\lambda, (i, j, k)}(\{x_1,x_2\})(\gamma) \rangle
\]
is a polynomial in the entries of $\gamma$, $x_1$, and $x_2$.
\end{proof}

\begin{proposition}\label{lem:zonal_to_polynomial2}
	Fix $i_1$ and $i_2$ and let $J_1 = \{x_1,\dots,x_{i_1}\}$ and $J_2 = \{y_1,\dots,y_{i_2}\}$. For admissible tuples $(\lambda,i_1,j_1,k_1)$ and $(\lambda, i_2, j_2, k_2)$, 
	\[
        Z_\lambda(J_1, J_2)_{(i_1,j_1,k_1), (i_2, j_2, k_2)}
	\]
	  is a polynomial in the inner products between the vectors $x_1,\dots,x_{i_1},y_1,\dots,y_{i_2}$.
\end{proposition}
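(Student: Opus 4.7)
The plan is to express the matrix entry as a Haar integral over $\Ort{n}$ and then appeal to classical invariant theory. First I would unwind the definition of the $V$-inner product to write
\[
Z_\lambda(J_1,J_2)_{(i_1,j_1,k_1),(i_2,j_2,k_2)} = \int_{\Ort{n}} \langle \psi_{\lambda,(i_1,j_1,k_1)}(J_1)(\gamma), \psi_{\lambda,(i_2,j_2,k_2)}(J_2)(\gamma) \rangle_W \, d\gamma.
\]
Expanding the inner product in the orthonormal basis $\{w_k\}$ of $W$ and applying Lemma~\ref{lem:zonal_to_polynomial3} to each factor, the integrand becomes a polynomial in the entries of $\gamma$ and in the coordinates of $x_1,\ldots,x_{i_1},y_1,\ldots,y_{i_2}$. (If $|J_1|\neq i_1$ or $|J_2|\neq i_2$ the entry is identically zero, so there is nothing to prove.)

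Next I would separate this polynomial by its monomials in the vector coordinates. Since integration in $\gamma$ is linear and the sum is finite, the coefficient of each such monomial is the integral over $\Ort{n}$ of a polynomial in the entries of $\gamma$, which is a well-defined real number by compactness of $\Ort{n}$. Hence the full integral is a polynomial $P(x_1,\ldots,x_{i_1},y_1,\ldots,y_{i_2})$ in the vector coordinates. The key observation is that $P$ is invariant under the diagonal action $g\cdot(X,Y)=(gX,gY)$ of $\Ort{n}$: this follows from the $\Ort{n}$-equivariance of $\psi_{\lambda,(i,j,k)}$ proved in Lemma~\ref{lem:equivariant}, combined with the unitarity of $\pi$, which yields
\[
\langle \pi(g)\psi_{\lambda,(i_1,j_1,k_1)}(J_1),\pi(g)\psi_{\lambda,(i_2,j_2,k_2)}(J_2)\rangle_V = \langle \psi_{\lambda,(i_1,j_1,k_1)}(J_1),\psi_{\lambda,(i_2,j_2,k_2)}(J_2)\rangle_V.
\]

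To conclude, I would invoke the First Fundamental Theorem of invariant theory for $\Ort{n}$ (Weyl), which states that every $\Ort{n}$-invariant polynomial on a finite tuple of vectors in $\RR^n$ is a polynomial in their pairwise inner products. Applied to $P$, this gives the statement. I do not expect any serious obstacle: the substantive analytic content is already packaged into Lemma~\ref{lem:zonal_to_polynomial3}, and the remaining work is organizational, namely checking that the case distinction $i=0,1,2$ in the definition of $\psi_{\lambda,(i,j,k)}$ is absorbed uniformly by that lemma and that interchanging the (finite) sum over basis vectors with the integral against Haar measure is legitimate.
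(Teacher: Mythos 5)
Your proposal matches the paper's proof essentially verbatim: both unwind the $V$-inner product as a Haar integral, expand in the orthonormal basis $\{w_l\}$ of $W$, invoke Lemma~\ref{lem:zonal_to_polynomial3} to conclude the integral is a polynomial in the vector coordinates, deduce $\Ort{n}$-invariance from Lemma~\ref{lem:equivariant} and unitarity, and finish with the first fundamental theorem for $\Ort{n}$. The only difference is that you spell out the harmless interchange of the finite sum and the Haar integral, which the paper takes for granted.
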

\begin{proof}
By the definition of the inner product on $V$ we have 
\[
Z_\lambda(J_1, J_2)_{(i_1,j_1,k_1), (i_2, j_2, k_2)} = \int_{\Ort{n}}
        \langle \psi_{\lambda, (i_1, j_1, k_1)}(J_1)(\gamma), \psi_{\lambda, (i_2, j_2, k_2)}(J_2)(\gamma) \rangle \, d\gamma.
\]
Since the vectors $w_0,\dots,w_{\lambda_1-\lambda_2}$ form an orthonormal basis of $W$, this is equal to
\[
  \int_{\Ort{n}} \sum_{l=0}^{\lambda_1-\lambda_2} \langle \psi_{\lambda, (i_1, j_1, k_1)}(J_1)(\gamma), w_l \rangle 
  \langle w_l, \psi_{\lambda, (i_2, j_2, k_2)}(J_2)(\gamma)\rangle \, d\gamma.
\]
By Lemma~\ref{lem:zonal_to_polynomial3}, this is a polynomial in the entries of the vectors $x_1,\dots,x_{i_1},y_1,\dots,y_{i_2}$.

By Lemma~\ref{lem:equivariant}, the functions $\psi_{\lambda,(i_1,j_1,k_1)}$ and $\psi_{\lambda,(i_2,j_2,k_2)}$ are equivariant, so by unitarity of the inner product on $V$ it follows that 
\[
Z_\lambda(g J_1, g J_2)_{(i_1,j_1,k_1), (i_2, j_2, k_2)} = Z_\lambda(J_1, J_2)_{(i_1,j_1,k_1), (i_2, j_2, k_2)}
\]
for all $g \in \Ort{n}$. In other words, this is an $\Ort{n}$-invariant polynomial in the vectors $x_1,\dots,x_{i_1},y_1,\dots,y_{i_2}$. By invariant theory (see, e.g., \cite[\S F.1]{fulton91}), it follows that $Z_\lambda(J_1, J_2)_{(i_1,j_1,k_1), (i_2, j_2, k_2)}$ is a polynomial in the inner products between these vectors.
\end{proof}

\section{Efficient computation of the zonal matrices}\label{sec:speedups}

In this section, we explain how we compute the zonal matrices from Section~\ref{sec:zonal}. Throughout we assume $t=2$, but we will sometimes write $t$ instead of $2$ to make explicit the dependence on $t$. Compared to the construction of the zonal matrices in \cite{delaat2023lasserre}, we give a much more efficient approach, which is crucial to be able to perform computations with the truncation degree required to get a sharp bound for the $D_4$ root system.

We have
\begin{align*} 
    & Z_\lambda(J_1, J_2)_{(i_1,j_1,k_1), (i_2, j_2, k_2)}\\
    & \quad = \int_{\Ort{n}} \langle \psi_{\lambda, (i_1, j_1, k_1)}(J_1)(\gamma), \psi_{\lambda, (i_2, j_2, k_2)}(J_2)(\gamma) \rangle \, d\gamma  \\
    & \quad = \xi_{\lambda,i_1, j_1, k_1}(J_1) \xi_{\lambda,i_2, j_2, k_2}(J_2) \int_{\Ort{n}} \langle  \rho(\omega \gamma s(J_1) \epsilon) w_{k_1}, \rho(\omega \gamma s(J_2) \epsilon) w_{k_2} \rangle\, d\gamma\\
    & \quad = \xi_{\lambda,i_1, j_1, k_1}(J_1) \xi_{\lambda,i_2, j_2, k_2}(J_2) P_{k_1, k_2}(s(J_1)^{\sf T} s(J_2)),
\end{align*}
where we define
\begin{equation}\label{eq:PSpoly}
    P_{k_1, k_2}(S) = \int_{\Ort{n}} \langle  \rho(\omega \gamma  \epsilon) w_{k_1}, \rho(\omega \gamma S \epsilon) w_{k_2} \rangle\, d\gamma.
\end{equation}
Here $P_{k_1, k_2}(S)$ is a polynomial in the entries of the $n \times n$ matrix $S$. In this section we show how to compute $P_{k_1,k_2}(S)$ efficiently and how to use this to obtain $Z_\lambda(J_1,J_2)$ as a polynomial in the inner products.

\subsection{Additional symmetries}\label{sec:relations}

To compute $P_{k_1, k_2}(S)$ using the matrix entries of $\rho$, one could directly use the expression
\[
P_{k_1, k_2}(S) = \sum_{l=0}^m \int_{\Ort{n}} \langle  \rho(\omega \gamma  \epsilon) w_{k_1}, w_l \rangle \langle w_l, \rho(\omega \gamma S \epsilon) w_{k_2} \rangle\, d\gamma,
\]
where $m =\lambda_1-\lambda_2$. In this section, we describe additional symmetries under the action of the circle group $\Ort{2}$, which allows us to compute this more efficiently.

Denote by $\rho_\lambda$ the representation of $\GL{2}$ with signature $\lambda$. For any matrix $M$, we have $\rho_\lambda (M) = \det(M)^{\lambda_2} \rho_{(m, 0)}(M)$, and hence
\begin{align*}
P_{k_1, k_2}(S) &= \int_{\Ort{n}} \langle  \rho_\lambda (\omega \gamma  \epsilon) w_{k_1}, \rho_\lambda (\omega \gamma S \epsilon) w_{k_2} \rangle\, d\gamma \\
&= \int_{\Ort{n}} \det (\overline{\omega \gamma  \epsilon})^{\lambda_2}  \langle  \rho_{(m, 0)} (\omega \gamma  \epsilon) w_{k_1}, \rho_{(m, 0)} (\omega \gamma S \epsilon) w_{k_2} \rangle \det (\omega \gamma  S \epsilon)^{\lambda_2} \, d\gamma,
\end{align*}
where $\overline{A}$ denotes the entrywise complex conjugate of $A$.
We introduce some notation to conveniently describe and manipulate expressions such as the one above.
We will refer to the representation $\rho_{(m, 0)}$ as $\rho$ in this section.
Let $\alpha = (\alpha_1, \dots, \alpha_{\lambda_2})$ be a vector with 
\[
\alpha_{i} = (\alpha_{i 1},\alpha_{i 2}) \in \{(1, 1), (1, 2), (2, 1), (2, 2)\},
\]
and let $e$ be the vector with $e_i = (1, 2)$ for all $i$.
We also define the matrices $A = \omega \gamma \epsilon$ and $B = \omega \gamma S \epsilon$.
Denote by $[\alpha]$ the orbit of $\alpha$ under the action of the symmetric group $S_{\lambda_2}$ on the $\lambda_2$ components.
For such $\alpha$ and $0 \leq l_1, l_2 \leq m$ we consider the following polynomial in the entries of $S$:
\begin{equation}\label{eq:elementary_integral}
	J_{l_1, l_2, [\alpha]} = \int_{\Ort{n}} \det (\bar{A})^{\lambda_2}    \rho(A)_{k_1, l_1}^*  \rho(B)_{l_2, k_2}  \prod_{i = 1}^{\lambda_2} B_{\alpha_{i 1}, 1} B_{\alpha_{i 2}, 2} \, d\gamma,
\end{equation}
where $\rho^*$ is the adjoint of $\rho$.

For each signature $\lambda$ that we need and each $0 \leq k_1, k_2 \leq m$, we will show there are coefficients $c_{l_1, k_2, [\sigma]}$, independent of $S$ and $k_1$, such that
\begin{equation}\label{eqcoeffs}
J_{l_1, l_1, [\sigma]} = c_{l_1, k_2, [\sigma]} J_{0, 0, [e]}
\end{equation}
for all $0 \leq l_1 \leq m$ and $\sigma \in \{(1,2), (2,1)\}^{\lambda_2}$.  We will compute these coefficients by solving a linear system for each $\lambda$ and $k_2$.
By expanding both the inner product and the determinant involving $B$, the polynomial $P_{k_1, k_2}(S)$ may then be computed as
\[
    P_{k_1, k_2}(S) = \sum_{l_1, \sigma} (-1)^{s(\sigma)}  J_{l_1, l_1, [\sigma]},
\]
where the sum is over $0 \leq l_1 \leq m$ and all tuples $\sigma \in \{ (1, 2), (2, 1) \}^{\lambda_2}$, and $s(\sigma)$ is the number of times the pair $(2, 1)$ occurs in $\sigma$. By grouping terms and using \eqref{eqcoeffs} we can write this as
\[
P_{k_1, k_2}(S) = J_{0, 0, [e]} \sum_{l_1, [\sigma]} (-1)^{s(\sigma)} \binom{\lambda_2}{s(\sigma)} c_{l_1, k_2, [\sigma]}.
\]
In summary, for fixed $\lambda$, $k_1$ and $k_2$, we need to compute only one integral of the form \eqref{eq:elementary_integral} using this approach.

We now show how to compute these coefficients. 
For $g \in \Ort{t}$, we may substitute $\gamma$ with  $\left( g \oplus g \oplus I_{n - 2t} \right) \gamma$, and this leaves the expression $J_{l_1, l_2, [\alpha]}$ invariant by the invariance property of the Haar measure of $\Ort{n}$.
We have $\omega(g \oplus g \oplus I_{n - 2t}) \gamma = g \omega \gamma$ and hence we may substitute $gA$ for $A$ and $gB$ for $B$.
This gives
\begin{equation}\label{eq:elementary_integral2}
	J_{l_1, l_2, [\alpha]} = \sum_{l_3, l_4, [\beta]}  \det(\bar{g})^{\lambda_2} \rho(\bar{g})_{l_1, l_3} J_{l_3, l_4, [\beta]} \rho(g)_{l_2, l_4} \sum_{\zeta \in [\beta]} \prod_{i = 1}^{\lambda_2} g_{\alpha_{i 1}, \zeta_{i 1}} g_{\alpha_{i 2}, \zeta_{i 2}}.
\end{equation}
We now phrase this in terms of a representation.

Recall that the representation $\Sym^{\lambda_2}(\wedge^2 U) \cong \CC$ is given by multiplication by $\det(g)^{\lambda_2}$. Also recall the representation on $\End(W)$ given by 
\[
g \cdot M = \rho(g) M \rho(g)^*.
\]
For this representation, a basis is given by $w_{l_1} \otimes w_{l_2}^*$. Finally, let $U = \CC^2$ be the representation with the standard action of $\Ort{2}$ and consider the representation $(\phi, \Sym^{\lambda_2}(U^{\otimes 2}))$.
The vectors 
\[
	e_{[\beta]} = \prod_{i = 1}^{\lambda_2} e_{\beta_{i 1}} \otimes e_{\beta_{i 2}}
\]
form a basis.
We consider the inner product such that this basis is orthonormal.
We then consider the dual representation $\phi(g^*)^*$.
We have
\[
	\phi(g^*) e_{[\alpha]} = \prod_{i = 1}^{\lambda_2} g^* e_{\alpha_{i 1}} \otimes g^* e_{\alpha_{i 2}}  = \sum_{[\beta]} \sum_{\zeta \in [\beta]} \prod_{i = 1}^{\lambda_2} g_{\alpha_{i1},\zeta_{i1}} g_{\alpha_{i2},\zeta_{i2}} e_{\beta}
\]
and hence
\[
	\langle	e_\alpha, \phi(g^*)^* e_\beta \rangle = \langle	\phi(g^*) e_\alpha, e_\beta \rangle = \sum_{\zeta \in [\beta]} \prod_{i = 1}^{\lambda_2} g_{\alpha_{i1},\zeta_{i1}} g_{\alpha_{i2},\zeta_{i2}}.
\]

Tensoring the above representations gives the representation 
\[
(\Phi, \Sym^{\lambda_2}(\wedge {}^2 U) \otimes \End(W) \otimes \Sym^{\lambda_2}(U^{\otimes 2}))
\]
and a basis is given by $e_{l_1, l_2, [\alpha]} = w_{l_1} \otimes w_{l_2}^* \otimes e_{[\alpha]} $.
We get
\begin{equation}\label{eq:bigin}
\langle e_{l_1, l_2, [\alpha]}, \Phi(g) e_{l_3, l_4, [\beta]}\rangle =  \det(g)^{\lambda_2}\rho(g)_{l_1, l_3} \rho(g)_{l_2, l_4} \sum_{\zeta \in [\beta]} \prod_{i = 1}^{\lambda_2} g_{\alpha_{i 1}, \zeta_{i 1}} g_{\alpha_{i 2}, \zeta_{i 2}}.
\end{equation}
Using \eqref{eq:elementary_integral2} and \eqref{eq:bigin} we have
\begin{align*}
	&\Phi(g) \sum_{l_3, l_4, [\beta]} J_{l_3, l_4, [\beta]} e_{l_3, l_3, [\beta]}\\
 &\quad= \sum_{l_1, l_2, [\alpha]} \sum_{l_3, l_4, [\beta]} J_{l_3, l_4, [\beta]} \langle e_{l_1, l_2, [\alpha]}, \Phi(g)  e_{l_3, l_4, [\beta]}  \rangle e_{l_1, l_2, [\alpha]}\\
	&\quad= \sum_{l_1, l_2, [\alpha]} \sum_{l_3, l_4, [\beta]}  \det(\bar{g})^{\lambda_2} \rho(\bar{g})_{l_1, l_3} J_{l_3, l_4, [\beta]} \rho(g)_{l_2, l_4} \sum_{\zeta \in [\beta]} \prod_{i = 1}^{\lambda_2} g_{\alpha_{i 1}, \zeta_{i 1}} g_{\alpha_{i 2}, \zeta_{i 2}} e_{l_1, l_2, [\alpha]}\\
 &\quad= \sum_{l_1, l_2, [\alpha]} J_{l_1, l_2, [\alpha]} e_{l_1, l_2, [\alpha]}.
\end{align*}
Defining
\[
J = \sum_{l_1, l_2, [\alpha]} J_{l_1, l_2, \alpha} e_{l_1, l_2, [\alpha]},
\]
this equation is expressed as $\Phi(g)J=J$ for all $g \in \Ort{2}$.
Using the exponential map, this is equivalent to the condition $d \Phi(X) J = 0$, where
\[
X = \begin{bmatrix}
0 & 1 \\
-1 & 0
\end{bmatrix},
\]
 and $\Phi(g_0) J = J$,  where $g_0$ is an orthogonal matrix with $\det(g_0) = -1$.
This follows from the fact that $X$ spans the Lie algebra $\mathfrak{so}(2)$.
The additional condition with $g_0$ comes from the fact that the equation has to hold for all orthogonal matrices and not merely for the special orthogonal matrices.

We now write out the system $d \Phi(X) J = 0$ in components.
Let $g(t)$ be a curve of special orthogonal matrices such that $g(0) = I$ and $g'(0) = X$. 
To obtain the components of $d \Phi (X)$, we plug  $g(t)$ into \eqref{eq:bigin} and take the derivative. 
Using the product rule, one obtains
\begin{align*}
	&\langle e_{l_1, l_2, [\alpha]}, d \Phi(X) e_{l_3, l_4, [\beta]}\rangle \\
 &\quad =   d\rho(X)_{l_1, l_3} \delta_{l_2, l_4} \delta_{[\alpha], [\beta]} +  \delta_{l_1, l_3} d\rho(X)_{l_2, l_4} \delta_{[\alpha], [\beta]} +  \delta_{l_1, l_3} \delta_{l_2, l_4} G'(0),
\end{align*}
where we have defined
\[
	G(t) = \sum_{\zeta \in [\beta]} \prod_{i = 1}^{\lambda_2} g(t)_{\alpha_{i 1}, \zeta_{i 1}} g(t)_{\alpha_{i 2}, \zeta_{i 2}}.
\]
A formula for $d\rho(X)$ can be found in Section~\ref{sec:gl2reps}. One may further verify that each term of $G'(0)$ is zero unless $\alpha_{ij}$ and $\zeta_{ij}$ differ for exactly one $ij$, in which case the term equals $X_{\alpha_{ij}, \zeta_{ij}}$. Together this gives explicit formulas for the linear constraints on the coefficients $J_{l_1,l_2,[\alpha]}$ arising from $d\Phi(X) J = 0$.

We now work out the condition $\Phi(g_0)J = J$. For this, we let $d_j(\alpha)$ be the total number of occurrences of $j$ in $\alpha$.
Recall the signature of $\rho$ is $(m, 0)$, so that we have $c_1(l)  = m - l$ and $c_2(l) = l$.

\begin{lemma}\label{lemma:zero_integral2}
	If $\lambda_2 + c_2(l_1) + c_2(l_2) + d_2(\alpha)$ is odd, then $ J_{l_1, l_2, [\alpha]} = 0 $. 
\end{lemma}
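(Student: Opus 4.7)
My plan is to exhibit a single orthogonal substitution that fixes the Haar measure but multiplies the integrand by a sign equal to $(-1)^{\lambda_2 + c_2(l_1) + c_2(l_2) + d_2(\alpha)}$. Invariance of the Haar measure then forces $J_{l_1,l_2,[\alpha]}$ to equal this signed copy of itself, so the integral must vanish when the sign is $-1$.

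Concretely, I would take $g_0 = \mathrm{diag}(1,-1) \in \Ort{2}$ and substitute $\gamma \mapsto (g_0 \oplus g_0 \oplus I_{n-2t})\gamma$ inside the integral \eqref{eq:elementary_integral}. Since $\omega(g_0 \oplus g_0 \oplus I_{n-2t}) = g_0 \omega$, this substitution replaces $A = \omega\gamma\epsilon$ and $B = \omega\gamma S \epsilon$ by $g_0 A$ and $g_0 B$ respectively, while leaving $S$ and $d\gamma$ untouched. I would then track each of the four factors in the integrand:
\begin{itemize}
\item $\det(\overline{A})^{\lambda_2}$ acquires a factor $\det(g_0)^{\lambda_2} = (-1)^{\lambda_2}$;
\item using $\rho(g_0)w_l = (-1)^{c_2(l)} w_l$ (which follows from the diagonal formula for $\rho$ in Section~\ref{sec:gl2reps}, since $c_2(l)$ counts the number of $e_2$'s in $w_l$) and the self-adjointness of the real diagonal matrix $\rho(g_0)$, the factor $\rho(A)^*_{k_1,l_1} = \overline{\langle w_{l_1}, \rho(A)w_{k_1}\rangle}$ picks up $(-1)^{c_2(l_1)}$;
\item similarly $\rho(B)_{l_2,k_2}$ picks up $(-1)^{c_2(l_2)}$;
\item the product $\prod_{i=1}^{\lambda_2} B_{\alpha_{i1},1} B_{\alpha_{i2},2}$ picks up one sign change for every index equal to $2$ among the $\alpha_{ij}$, contributing $(-1)^{d_2(\alpha)}$.
\end{itemize}

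Combining the four signs gives the identity
\[
J_{l_1, l_2, [\alpha]} = (-1)^{\lambda_2 + c_2(l_1) + c_2(l_2) + d_2(\alpha)} J_{l_1, l_2, [\alpha]},
\]
from which the lemma follows.

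The only step that requires any real care is the sign coming from $\rho(A)^*_{k_1,l_1}$; once one pushes $\rho(g_0)$ through the inner product using self-adjointness, it is immediate, but I would want to write this out explicitly to make sure the sign lands on $c_2(l_1)$ rather than $c_2(k_1)$. Everything else is pure bookkeeping.
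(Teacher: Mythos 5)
Your proof is correct and is essentially the same argument as the paper's: the paper packages the substitution $\gamma \mapsto (g_0 \oplus g_0 \oplus I_{n-2t})\gamma$ into the relation $\Phi(g_0)J = J$ established earlier and then reads off the diagonal matrix coefficient of $\Phi(g_0)$, which is exactly the sign $(-1)^{\lambda_2 + c_2(l_1) + c_2(l_2) + d_2(\alpha)}$ you compute factor by factor. Your handling of the adjoint factor (pushing the real diagonal $\rho(g_0)$ through to land the sign on $c_2(l_1)$) is correct.
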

\begin{proof}
Let $g_0 = \begin{bmatrix}
1 & 0 \\
0 & -1
\end{bmatrix} $.
We then have 
\[
	\langle e_{l_1, l_2, [\alpha]}, \Phi(g_0) e_{l_3, l_4, [\beta]}\rangle = \delta_{l_1, l_3} \delta_{l_2, l_4} \delta_{[\alpha] ,[\beta]}  (-1)^{\lambda_2 + c_2(l_1) + c_2(l_2) + d_2(\alpha)} 
\]
and hence from $J = \Phi(g_0)J$ we obtain
\[
    J_{l_1, l_2, [\alpha]} = (-1)^{\lambda_2 + c_2(l_1) + c_2(l_2) + d_2(\alpha)} J_{l_1, l_2, [\alpha]}.\qedhere
\]
\end{proof}

We give additional conditions under which $J_{l_1,l_2,[\alpha]}$ vanishes.

\begin{lemma}\label{lemma:zero_integral}
	Let $j \in \{1,2\}$. If $c_j(l_1) + \lambda_2 - (c_j(l_2) + d_j(\alpha)) \neq 0$, then $ J_{l_1, l_2, [\alpha]} = 0 $. 
\end{lemma}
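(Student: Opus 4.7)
The plan is to exploit Haar invariance under a continuous substitution $\gamma \to H_j \gamma$ in the spirit of Lemma~\ref{lemma:zero_integral2}, but using a one-parameter family of rotations rather than the single reflection $g_0$. Let $H_j \in \Ort{n}$ be the rotation by angle $\theta$ in the $2$-plane spanned by $e_j$ and $e_{t+j}$, extended by the identity on the orthogonal complement. Since $H_j$ is a real orthogonal matrix, the substitution leaves $J_{l_1, l_2, [\alpha]}$ unchanged.

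The key observation to verify first is that the $j$-th row of $\omega$, namely $e_j^{\sf T} + i e_{t+j}^{\sf T}$, is a left-eigenvector of $H_j$ with eigenvalue $e^{-i\theta}$, while the other rows of $\omega$ are fixed by $H_j$. This gives $\omega H_j = D_j \omega$ with $D_j \in \GL{t, \CC}$ the diagonal matrix having entry $e^{-i\theta}$ in position $j$ and $1$ elsewhere. Conjugating (using that $\gamma$ remains real under the substitution) yields $\bar\omega H_j = D_j^{-1}\bar\omega$, since $\bar D_j = D_j^{-1}$. Consequently
\[
A \to D_j A, \qquad \bar A \to D_j^{-1}\bar A, \qquad B \to D_j B.
\]
The opposite scalings of $A$ and $\bar A$ are precisely the source of the minus signs appearing in the statement of the lemma.

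Plugging into \eqref{eq:elementary_integral} and using $\det(D_j^{\pm 1}) = e^{\mp i\theta}$ together with the diagonal action $\rho(D_j^{\pm 1}) w_k = e^{\mp i\theta c_j(k)} w_k$, I would track that $\det(\bar A)^{\lambda_2}$, $\rho(A)^*_{k_1, l_1}$, $\rho(B)_{l_2, k_2}$, and $\prod_i B_{\alpha_{i1}, 1}B_{\alpha_{i2}, 2}$ scale by $e^{i\theta \lambda_2}$, $e^{i\theta c_j(l_1)}$, $e^{-i\theta c_j(l_2)}$, and $e^{-i\theta d_j(\alpha)}$, respectively; the last scaling uses that a factor $B_{r,s}$ picks up a phase $e^{-i\theta}$ exactly when its row index $r$ equals $j$. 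Combining yields the overall phase $e^{i\theta E}$ with $E = c_j(l_1) + \lambda_2 - c_j(l_2) - d_j(\alpha)$, and Haar invariance forces $(e^{i\theta E} - 1) J_{l_1, l_2, [\alpha]} = 0$ for every $\theta$, from which the lemma follows.

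The main conceptual step is identifying the substitution $H_j$ whose effect on $\omega$ is purely diagonal; once this is in place, the remaining computation is routine power bookkeeping analogous to the proof of Lemma~\ref{lemma:zero_integral2}, with the continuous phase $e^{i\theta}$ in place of the discrete sign $(-1)$.
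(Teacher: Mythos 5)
Your proposal is correct and follows essentially the same route as the paper's own proof: the paper also substitutes $\gamma \to R(\theta)\gamma$ for $R(\theta)$ a rotation in the $e_j, e_{t+j}$ plane, uses $\omega R(\theta) = A(\theta)\omega$ with $A(\theta)$ diagonal (your $D_j$, up to the sign of $\theta$), and derives $J_{l_1,l_2,[\alpha]} = e^{\mp i\theta(c_j(l_1)+\lambda_2 - c_j(l_2) - d_j(\alpha))}J_{l_1,l_2,[\alpha]}$ to conclude. Your factor-by-factor bookkeeping agrees with the paper's reduction of the sum in \eqref{eq:elementary_integral2} to the diagonal term, so the two write-ups are the same argument presented in slightly different form.
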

\begin{proof}
Let $R(\theta)$ be the matrix rotating the $j$ and $j+t$ rows of $\gamma$ by
\[
\begin{bmatrix}
\cos(\theta) & -\sin(\theta) \\
\sin(\theta) & \cos(\theta)
\end{bmatrix}.
\]
We then have $ \omega R(\theta)  = A(\theta) \omega $, where $A(\theta)$ is the diagonal matrix with $e^{i\theta}$ at the $j$th diagonal entry  and $1$ at the other diagonal entry. The matrix $R(\theta)$ is orthogonal and by a similar argument as before we may substitute $\omega$ with $A(\theta) \omega$.  
We then obtain that $J_{l_1, l_2, [\alpha]}$ is equal to
\[
 \sum_{l_3, l_4, [\beta]} \det(\overline{A(\theta)})^{\lambda_2} \rho(\overline{A(\theta)})_{l_1,l_3} J_{l_3, l_4, [\beta]} \rho(A(\theta))_{l_2, l_4} \prod_{i = 1}^{\lambda_2} A(\theta)_{\alpha_{i 1} \beta_{i 1}} A(\theta)_{\alpha_{i 2} \beta_{i 2}}.
 \]
Working this out gives
\[
	J_{l_1, l_2, [\alpha]}= e^{- i \theta(c_j(l_1) + \lambda_2 - (c_j(l_2) + d_j(\alpha)) )} J_{l_1, l_2, [\alpha]}.
 \]
Since this equation holds for all $\theta$, we have $J_{l_1, l_2, [\alpha]} = 0$.
\end{proof}

We thus have the system $d \Phi(X) J = 0$ and certain components of $J$ vanish due to Lemmas \ref{lemma:zero_integral2} and \ref{lemma:zero_integral}. 
As a final step, which is necessary to ensure the solution space is one-dimensional, we add the following relations. 
By expanding into the monomials $B_{11}$, $B_{12}$, $B_{21}$, and $B_{22}$, there are coefficients $a_{l_2,k_2,[\alpha],\mu}$ such that
\[
	\rho(B)_{l_2, k_2}  \prod_{i = 1}^{\lambda_2} B_{\alpha_{i 1}, 1} B_{\alpha_{i 2}, 2} = \sum_{\mu} a_{l_2, k_2, [\alpha], \mu} B^{\mu}.
\]
With 
\[
	K_{l_1, \mu} = \int_{\Ort{n}} \det (\bar{A})^{\lambda_2} \rho(A)_{k_1, l_1}^* B^{\mu} \,d\gamma
\]
we have
\[
	J_{l_1, l_2, [\alpha]} 
 = \sum_{\mu} a_{l_2, k_2, [\alpha], \mu} K_{l_1, \mu}.
\]
We now enlarge the linear system by introducing new variables for the $K_{l_1,\mu}$, and for each $l_1$, $l_2$, and $\alpha$ we add the above constraint on the variables $J_{l_1,l_2,[\alpha]}$ and $K_{l_1,\mu}$.

Finally, we project the linear space satisfying all of the above relations to the space 
\[
\mathrm{span}\{e_{l_1, l_1, [\sigma]} \mid 0 \leq l_1 \leq m, \, \sigma \in \{(1,2),(2,1)\}^{\lambda_2}\}.
\]
For this, we consider the homogeneous linear system given by the constraints discussed above. We order the columns so that the variables corresponding to $J_{l_1,l_1,[\sigma]}$ are at the end, and $J_{0,0,[e]}$ corresponds to the final column. Then we perform row reduction using rational arithmetic and find that the final column is the only free variable among the columns corresponding to the variables $J_{l_1,l_1,[\sigma]}$. From this, we find the coefficients $c_{l_1, k_2, [\sigma]}$ for which \eqref{eqcoeffs} holds.

\subsection{Real parts}
\label{sec:realparts}

As shown in Section~\ref{sec:relations}, to compute the zonal matrices we need to compute the quantity  
\[
    J_{0, 0, [e]} = \int_{\Ort{n}} \det (\overline{\omega} \gamma \epsilon)^{\lambda_2} \rho(\omega \gamma \epsilon)^*_{k_1, 0} \rho(\omega \gamma S \epsilon)_{0, k_2} ((\omega \gamma S \epsilon)_{1,1}(\omega \gamma S \epsilon)_{2,2})^{\lambda_2}\,d\gamma.
\]
In this section we will show that for $\lambda_1 > 0$, this is equal to
\begin{equation}\label{eq:realparts}
    2 \int_{\Ort{n}} \mathcal{R}\mathopen{}\left( \det (\overline{\omega} \gamma \epsilon)^{\lambda_2} \rho(\omega \gamma \epsilon)^*_{k_1, 0} \right)\mathclose{} \mathcal{R}\mathopen{}\left(  \rho(\omega \gamma S \epsilon)_{0, k_2} ((\omega \gamma S \epsilon)_{1,1} (\omega \gamma S \epsilon)_{2,2})^{\lambda_2} \right)\mathclose{}\,d\gamma,
\end{equation}
where $\mathcal{R}(z)$ denotes the real part of $z \in \mathbb{C}$. This yields a factor two speedup in the most expensive part of the generation of the zonal matrices.

For a matrix $M$ and a vector $a$ of natural numbers of the same size, let us adopt the notation
\[
    M^a = \prod_{i,j} M_{i,j}^{a_{i,j}}.
\]
By multilinearity and the formula for the matrix coefficients of the representations of $\GL{2}$, it suffices to show
\begin{equation} \label{eq:integral1}  
    \int_{\Ort{n}} ( \overline{\omega} \gamma \epsilon )^a (\omega \gamma S \epsilon)^b\, d\gamma = 2 \int_{\Ort{n}} \Rr  \left( ( \overline{\omega} \gamma \epsilon )^a \right) \Rr \left( ( \omega \gamma S \epsilon )^b \right)\, d\gamma
\end{equation}
for all $a, b \in \N^{2 \times 2}$ with $|a| = |b| = |\lambda|$.

To show this, we introduce the variables $R_{11}$, $R_{12}$, $R_{21}$, and $R_{22}$, the matrices
\[ R^+_k = 
    \mathopen{}\begin{bmatrix}
        R_{k1} I_2  & R_{k2} I_2  & 0
    \end{bmatrix}\mathclose{},  
\] 
and the vectors $R_k = \begin{bmatrix} R_{k1} & R_{k2}\end{bmatrix}$ for $k = 1, 2$. 
We then consider the polynomial
\begin{align}
    \int_{\Ort{n}} ( R^+_{1} \gamma \epsilon )^a (R^+_{2} \gamma S \epsilon)^b\, d\gamma 
    &= \sum_{\substack{ |u| = |v| = |\lambda|}} I_{u, v} R_{1}^{u} R_{2}^{v} \label{eq:poly1} \\  
    &= \sum_{|s| = 2|\lambda|} \sum_{\substack{u+v = s\\ |u| = |v| = |\lambda|}} I_{u, v} R_{11}^{u_1} R_{12}^{u_2} R_{21}^{v_1}R_{22}^{v_2}, \nonumber
\end{align}
where the real numbers $I_{u, v}$ are obtained by working out brackets and gathering terms.

Substituting $R_{11} = 1$, $R_{12} = -i$, $R_{21} = 1$ and $R_{22} = i$ gives the left-hand side of \eqref{eq:integral1}, which is a real number by \cite[Section 3]{delaat2023lasserre}. So the sum over all terms with $u_2+v_2$ odd vanishes.
Since $|s|$ is even, $u_1 + v_1$ is restricted to be even too.
We reparametrize the sum and obtain that the left-hand side of (\ref{eq:integral1}) is given by
\begin{equation} \label{eq:integral2}
    \sum_{|s| = |\lambda|} (-1)^{s_2} \sum_{\substack{u + v = 2s \\ |u| = |v| = |\lambda|}} (-1)^{u_2} I_{u, v}.
\end{equation}
A similar reasoning shows that the right-hand side of \eqref{eq:integral1}  is equal to 
\[
    2\sum_{|s| = |\lambda|} (-1)^{s_2} \sum_{\substack{u + v = 2s\\u_2\, \mathrm{even} \\ |u| = |v| = |\lambda|}} I_{u, v}.
\]

We now substitute $R_1 = R_2$ in (\ref{eq:poly1}) to obtain the polynomial
\begin{equation}\label{eq:poly4} 
\begin{split}
     \int_{\Ort{n}} ( R^+_{1} \gamma \epsilon )^a (R^+_{1} \gamma S \epsilon)^b \,d\gamma     &= \sum_{\substack{ |u| = |v| = |\lambda|}} I_{u, v} R_{1}^{u} R_{1}^{v} \\
    &= \sum_{|s| = 2|\lambda|} \sum_{\substack{u+v = s\\ |u| = |v| = |\lambda|}} I_{u, v} R_1^s.
    \end{split}
\end{equation}
Similarly as before, we may substitute $\gamma$ with $\left( g \oplus g \oplus I_{n - 2t} \right) \gamma$, and this leaves the polynomial \eqref{eq:poly4} invariant by the invariance property of the Haar measure of $\Ort{n}$. 
We have $R_1^+ \left( g \oplus g \oplus I_{n - 2t} \right) \gamma = g R_1^+ \gamma$. 
Hence polynomial \eqref{eq:poly4} is a polynomial in $R_{11}^2 + R_{12}^2$ by invariant theory.
Since it is also a homogeneous polynomial of total degree $2|\lambda|$, it must be linearly proportional to the polynomial
\begin{equation}\label{eq:invpoly1}
    \left( R_{11}^2 + R_{12}^2 \right)^{|\lambda|}.
\end{equation}
Hence the $s$ which occur in the sum in \eqref{eq:poly4} must have even entries, and the polynomial \eqref{eq:poly4} may be written as
\[
    \sum_{|s| = |\lambda|} \sum_{\substack{u+v = 2s\\ |u| = |v| = |\lambda|}} I_{u, v} R_1^{2s} = \sum_{|s| = |\lambda|} c_s R_1^{2s}.
\]
Furthermore, since it must be linearly proportional to \eqref{eq:invpoly1}, we have 
\[
c_s = \binom{|\lambda|}{s_2} c_0
\]
by the binomial theorem.
We now rearrange terms to obtain
\begin{align*}
    \sum_{u+v = 2s} (-1)^{u_2} I_{u, v} 
    &=  \sum_{\substack{u+v = 2s \\ u_2 \textup{ even}}} (-1)^{u_2} I_{u, v} + \sum_{\substack{u+v = 2s \\ u_2 \textup{ odd}}} (-1)^{u_2} I_{u, v}  \\ 
    &= 2 \sum_{\substack{u+v = 2s \\ u_2 \textup{ even}}} (-1)^{u_2} I_{u, v} - \sum_{\substack{u+v = 2s}} I_{u, v}  \\ 
    &= 2 \sum_{\substack{u+v = 2s \\ u_2 \textup{ even}}} (-1)^{u_2} I_{u, v} - c_s, 
\end{align*}
where in each sum we implicitly assume $|u| = |v| = |\lambda|$.
Using (\ref{eq:integral2}), we now see that we may write the left-hand side of (\ref{eq:integral1}) as 
\begin{align*}
    \sum_{|s| = |\lambda|} (-1)^{s_2} \sum_{u+v = 2s} (-1)^{u_2} I_{u, v}
    &= 2\sum_{|s| = |\lambda|} (-1)^{s_2} \sum_{\substack{u+v = 2s \\ u_2 \text{ even}}} I_{u, v} - \sum_{|s| = |\lambda|} (-1)^{s_2} c_s  \\
    &= 2\sum_{|s| = |\lambda|} (-1)^{s_2} \sum_{\substack{u+v = 2s \\ u_2 \text{ even}}} I_{u, v},
\end{align*}
since
\[
\sum_{|s| = |\lambda|} (-1)^{s_2}c_s = c_0\sum_{s_2 =0}^{|\lambda|}\binom{|\lambda| }{s_2} (-1)^{s_2} =  c_0(1-1)^{|\lambda|} = 0
\]
whenever $|\lambda| > 0$.
Recall that the sum over even $u_2$ equals the integral of the product of the real parts. Hence we have shown equation \eqref{eq:integral1}, which is what we wanted to show.

\subsection{Inner products}\label{sec:computational_aspects}

In this section, we describe how to compute
\[
Z_{\lambda}(J_1, J_2)_{(i_1, j_1, k_1), (i_2, j_2, k_2)}
\]
efficiently as a polynomial in the inner products between the vectors in $J_1 \cup J_2$.

We have 
\[
\rho(\omega \gamma h \epsilon) w_{k_1} = \rho(\omega \gamma \epsilon) w_{k_1}
\]
for all $h \in \Ort{n-2}$, where as before we view $h$ as a matrix in $\Ort{n}$ fixing the first $2$ coordinates.
Let $P_{k_1,k_2}$ be the polynomial defined in \eqref{eq:PSpoly}. By the invariance property of the Haar measure, we have 
\[
P_{k_1,k_2}(hS) = P_{k_1, k_2}(S)
\]
for all $h \in \Ort{n-2}$. 
Let $S_1$ be the top-left $2 \times 2$ block of $S$ and $S_2$ the bottom-left $(n-2) \times 2$ block of $S$.
Using invariant theory (see, e.g., \cite[\S F.1]{fulton91}), we see that $P_{k_1, k_2}(S)$ must be a polynomial in the entries of $S_1$ and the inner products between the columns of $S_2$. 

Consider the ideal $\mathcal J$ in $\RR[S]$ generated by the entries of  $S^{\sf T}S - I$ and the monomials $S_{i,j}$ for $i > 2 +j$. 
Consider the polynomials in $\mathcal J$ given by
\begin{gather*}
     S_{i, j} \textup{ with } i > 2 + j \text{ and } j \leq 2,  \quad
     1 - \sum_{i = 1}^{4} S^2_{i, 2}, \quad \sum_{i = 1}^{3} S_{i, 1} S_{i, 2}, \quad 1 - \sum_{i = 1}^{3} S_{i, 1}^2. 
\end{gather*}
We now use the above polynomials to perform Euclidean division on $P_{k_1, k_2}$ and show the remainder $p_{k_1,k_2}$ is a polynomial in the entries of $S_1$. Here we use a lexicographical term order where the variables are ordered such that $S_{4,2} > S_{3,2} > S_{3,1}$ and $S_{i,j} < S_{3,1}$ if $i\le 2$ and $S_{i,j} > S_{4,2}$ if $i>2+j$. This results in the following concrete procedure.
We first remove the terms in $P_{k_1, k_2}$ that contain a variable $S_{i, j}$ with $i > 2 + j$, after which we obtain a polynomial of the form
\begin{equation*}\label{eq:firststage}
    \sum_{\alpha, a, b, c} C_{\alpha, a, b, c} S_1^\alpha S_{3, 1}^{2a} ( S_{3, 1} S_{3, 2})^b (S_{3, 2}^2+S_{4,2}^2)^c
\end{equation*}
for some $C$, $\alpha$, $a$, $b$, and $c$.
In this polynomial, we first replace every occurrence of $S_{4,2}^2$ with $1 - S_{1, 2}^2 - S_{2, 2}^2 - S_{3, 2}^2$, then every occurrence of $S_{3,1} S_{3, 2}$ with $- S_{1,1} S_{1, 2}-S_{2,1} S_{2, 2}$, and finally every occurrence of $S_{3, 1}^2$ with $1 - S_{1, 1}^2 - S_{2, 1}^2$.
This gives a polynomial $p_{k_1,k_2}$ in the entries of $S_1$. At each step we have subtracted elements of $\mathcal{J}$, so
\[
P_{k_1, k_2} - p_{k_1,k_2} \in \mathcal J.
\]

From formula \eqref{eq:rhomatrixcoeffs} for the matrix coefficients of the representation $\rho$ of $\GL{2}$, it follows that every monomial in the expansion of $P_{k_1,k_2}(S)$ contains $c_1(k_2)$ variables 
from the first column of $S$ and $c_2(k_2)$ variables from the second column of $S$. Furthermore, in each step of the procedure to obtain $p_{k_1,k_2}$ from $P_{k_1, k_2}$, the number of variables in each monomial from a given column stays the same or drops by an even number. This shows that in each monomial in $p_{k_1,k_2}(S)$, the number of variables from column $l \in \{1,2\}$ is at most $c_l(k_2)$, and differs from this by an even number. 

We would like to say something similar about the number of variables from each row.
For all $g \in \Ort{n}$ we have
\begin{align}\label{eq:Ptransposetranspose}
\begin{split}
    P_{k_1, k_2} (g) &= \langle \Psi(w_{k_1}), \pi(g) \Psi(w_{k_2}) \rangle \\    
    &= \langle \pi(g^{\sf T}) \Psi(w_{k_1}), \Psi(w_{k_2}) \rangle \\   
    &= P_{k_2, k_1}(g^{\sf T}),
\end{split}
\end{align}
where we used  the fact that the inner product is unitary and $P_{k_2, k_1}(g^{\sf T})$ is real.
For each $g \in \Ort{n}$, there is an element $h \in \Ort{n-2}$ such that $(hg)_{i,j} = 0$ for $i > 2+j$. Hence,
\[
P_{k_1, k_2}(g) = P_{k_1, k_2}(hg) = p_{k_1,k_2}(h g) = p_{k_1,k_2}(g),
\]
where the second equality holds because $hg$ lies in the vanishing locus of $\mathcal J$. Using \eqref{eq:Ptransposetranspose}, it follows that
$
p_{k_1,k_2}(g) = p_{k_2,k_1}(g^{\sf T})
$
for all $g \in \Ort{n}$. 
The only variables which occur in $p_{k_1,k_2}(S)$ and $p_{k_2,k_1}(S^{\sf T})$ are from the top-left $2 \times 2$ block of $S$ and so we may view them as functions on $\RR^4$. 
Consider the subset of $\RR^4$ given by projecting $\Ort{n}$ to the top-left $2 \times 2$ block. 
Since
$
p_{k_1,k_2}(g) = p_{k_2,k_1}(g^{\sf T})
$
for all $g \in \Ort{n}$, the functions agree on this subset.
This subset has a nonempty interior, and hence the polynomials agree on $\RR^4$.
Hence we have equality of polynomials: $p_{k_1,k_2}(S) = p_{k_2,k_1}(S^{\sf T})$.
This shows that in each monomial in $p_{k_1,k_2}(S)$, the number of variables from row $l \in \{1,2\}$ is at most $c_l(k_1)$, and differs from this by an even number.

Since $c_l(k_1) = 0$ for $l > i_1$ and $c_l(k_2) = 0$ for $l > i_2$, it follows that $p_{k_1,k_2}(S)$ is a polynomial in the top-left $i_1 \times i_2$ block.
As discussed in the proof of Lemma~\ref{lem:zonal_to_polynomial3}, the $(l_1, l_2)$ entry, with $1 \le l_1 \le i_1$ and $1 \le l_2 \le i_2$, of $s(J_1)^{\sf T}s(J_2)$ has denominator $q_{l_1}(J_{1}) q_{l_2}(J_{2})$. 
To obtain the zonal matrix entry, we may replace each monomial $S^a$ in $p_{k_1,k_2}(S)$ with
\begin{align*}
    & \xi_{\lambda, i_1, j_1, k_1}(J_1) \xi_{\lambda, i_2, j_2, k_2}(J_2)(s(J_1)^{\sf T}s(J_2))^a \\
    & \quad = q_1(J_1)^{c_1(k_1)} q_2(J_1)^{c_2(k_1)} q_1(J_2)^{c_1(k_2)} q_2(J_2)^{c_2(k_2)} (s(J_1)^{\sf T}s(J_2))^a,
\end{align*}
and by the properties of $p_{k_1,k_2}$ as discussed above, this is a polynomial in the entries of the vectors in $J_1 \cup J_2$. From this we can easily read of the polynomial in terms of the inner products between these vectors.

We now describe additional techniques to speed up the implementation.
By Section~\ref{sec:relations} and \ref{sec:realparts}, the integrand of $P_{k_1, k_2}(S)$ may be replaced by the product of 
\begin{equation}\label{eq:realpart_left}
\mathcal{R}(\det(\overline \omega \gamma\epsilon)\rho(\overline \omega \gamma \epsilon)_{k_1,0}^*)
\end{equation}
and
\begin{equation}\label{eq:realpart_right}
\mathcal{R}(\rho(\omega \gamma S \epsilon)_{0, k_2} ((\omega \gamma S \epsilon)_{1,1} (\omega \gamma S \epsilon)_{2,2})^{\lambda_2}).
\end{equation}
The integration over $\Ort{n}$ and the substitution procedure described above may be swapped.
We first compute \eqref{eq:realpart_right} explicitly as a polynomial in the variables $S_{i,j}$ with $i \leq 2 + j$ and $j \leq 2$ and the top-left $4 \times 4$ block of $\gamma$. We then perform the above substitution procedure. Since we know that after integration over $\Ort{n}$ all terms with variables from $S_2$ will vanish, we remove those terms. This gives a polynomial in the top-left $i_1 \times i_2$ block of $S$ and the top-left $4 \times 4$ block of $\gamma$.

Whenever $\sum_i a_{ij}$ or $\sum_{i} a_{ji}$ is odd for any $j$, we have 
\[
\int_{\Ort{n}}\gamma^a \, d\gamma = 0.
\]
This means that we do not have to work out the product of the whole polynomial \eqref{eq:realpart_right} with \eqref{eq:realpart_left}. 
Instead, we only multiply terms that produce monomials in $\gamma$ which do not immediately vanish. We then integrate each monomial in $\gamma$ using the recursion formulas of \cite{gorinlopez2008}. This enables us to explicitly compute $p(S)$, from which we obtain the zonal matrix entry as explained above.

\section{Semidefinite programming formulation}
\label{sec:sdp}

Let $d_1 \leq d_2 \leq \delta$ be positive integers with $\delta$ even. In our application to the $D_4$ root system, we use $d_1=14$ and $d_2 = \delta=16$.

In the semidefinite program, we optimize over positive semidefinite matrices $\widehat K_\lambda$. Here the rows and columns are indexed by tuples $(i,j,k)$ for which $(\lambda,i,j,k)$ is admissible (see Section~\ref{sec:defandcont}) and $|\lambda| + 2j \leq d_2$, and we similarly restrict the rows and columns of $Z_\lambda$.
Let
\[
K(J_1, J_2) = \sum_{|\lambda| \le d_1} \langle \widehat K_\lambda,  Z_\lambda(J_1,J_2) \rangle.
\]

It follows from Proposition~\ref{lem:zonal_to_polynomial2} that $A_2K(Q)$ is a polynomial in the inner products between the vectors in $Q$. Using Section~\ref{sec:speedups}, we can find polynomials $p_1,\dots,p_4$ in $0$, $1$, $3$, and $6$ variables, such that
\begin{align*}
p_1 &= A_2K(\{x_1\}),\\
p_2(\langle x_1, x_2\rangle) &= A_2K(\{x_1, x_2\}),\\
p_3(\langle x_1, x_2\rangle, \langle x_1, x_3\rangle, \langle x_2, x_3\rangle) &= A_2K(\{x_1, x_2, x_3\}),\\
p_4(\langle x_1, x_2\rangle, \langle x_1, x_3\rangle, \ldots, \langle x_3, x_4\rangle) &= A_2K(\{x_1, \dots, x_4\}).
\end{align*}
Here $p_3$ is $S_3$-invariant and $p_4$ is $S_4$-invariant, where $S_3$ acts by permuting variables and the action of $S_4$ is such that 
\begin{align*}
&p_4(\langle x_{\sigma(1)}, x_{\sigma(2)}\rangle, \langle x_{\sigma(1)}, x_{\sigma(3)}\rangle, \ldots, \langle x_{\sigma(3)}, x_{\sigma(4)}\rangle)\\
&\quad= p_4(\langle x_1, x_2\rangle, \langle x_1, x_3\rangle, \ldots, \langle x_3, x_4\rangle).
\end{align*}
for all $\sigma \in S_4$. Note that the polynomials $p_1,\dots,p_4$ are of degree at most $d_2$, and their coefficients depend linearly on the entries of the matrices $\widehat K_\lambda$.

The Fourier truncated version of \eqref{pr:las} can be formulated as the following semidefinite program with polynomial inequality constraints:
\begin{equation}\label{pr:semialgebraic}
\begin{aligned}
& \text{minimize} & & (\widehat K_0)_{(0,0,0),(0,0,0)}\\
& \text{subject to} & & \widehat K_\lambda \succeq 0, && |\lambda| \leq d_1,\\
& & & p_1 \leq -1,\\
& & & p_2(u) \leq 0,&& u \in [-1,\cos\theta],\\
& & & p_3(u_1,u_2,u_3) \leq 0, && (u_1,u_2,u_3) \in \Delta_3,\\
& & & p_4(u_1,\dots,u_6) \leq 0, && (u_1,\dots,u_6) \in \Delta_4.
\end{aligned}
\end{equation}
Let
\[
G_3 = \begin{pmatrix} 1 & u_1 & u_2\\ u_1 & 1 & u_3 \\ u_2 & u_3 & 1\end{pmatrix} \quad\text{and}\quad G_4=  \begin{pmatrix} 1 & u_1 & u_2 & u_3 \\ u_1 & 1 & u_4 & u_5 \\ u_2 & u_4 & 1 & u_6\\ u_3 &u_5 & u_6 & 1\end{pmatrix}.
\]
The semialgebraic set $\Delta_i$ consists of all $u \in \R^{i \choose 2}$ with
$(u_j+1)(\cos\theta-u_j)\geq 0$ for all $1 \leq j \leq \binom{i}{2}$ and for which the determinants of all principal submatrices of size at least $3$ of $G_i$ are nonnegative.

We can now use sum-of-squares polynomials to relax this further to a semidefinite program. 
For the two-point constraint, for instance, we can use  Luk\'acs result (see, e.g., \cite{powers2000polynomials}) to replace the condition $p_2(u) \le 0$ for $u \in [-1,\cos\theta]$ by 
\begin{equation}\label{eq:lukacs}
p_2(u) + s_0(u) + (u+1)(\cos\theta-u) s_1(u) \equiv 0,
\end{equation}
where $s_0$ and $s_1$ are sum-of-squares polynomials of degree $\delta$ and $\delta-2$, respectively. Let $m_l(u)$ be a vector whose entries form a basis for the polynomials up to degree $l$. We can write 
\[
s_k(u) = \langle m_{\delta/2-k}(u)  m_{\delta/2-k}(u)^{\sf T}, M_k \rangle,
\]
where $M_k$ is a positive semidefinite matrix. In this way, we can replace the two-point polynomial inequality constraint by two positive semidefinite matrices and several linear constraints that enforce the polynomial identity \eqref{eq:lukacs}.

We can do something similar for the three-point and four-point constraints. Suppose $\Delta_i = \{u : g_k(u) \ge 0, \, k=1,\ldots,l\}$, then we relax the polynomial inequality constraint to the identity
\[
p_i(u) + \sum_{k=0}^l r_k(u) g_k(u) \equiv 0,
\]
where we set $g_0(u) = 1$ and $r_k(u)$ is a sum-of-squares polynomial of degree at most $\delta - \deg(g_k)$. By Putinar's theorem \cite{putinar93} (see also \cite[Chapter 13]{marshall2008positive}), this relaxation converges to the original polynomial constraint when $\delta \to \infty$.

In the resulting semidefinite program, the positive semidefinite matrix variables for the four-point constraint will be far larger than any other matrix in the program. For this reason, exploiting the symmetries in the polynomials is essential.

If a semialgebraic set is invariant under the action of a group, then there exists a description in terms of invariant polynomials \cite{MR1677398}. Let 
\[
\{q_1, \dots, q_l\}
\]
be an orbit of the polynomials describing $\Delta_i$ under the action of $S_i$. Then we can replace the polynomials in this orbit by the $S_i$-invariant polynomials 
\[
\sum_{\substack{B\subseteq \{1, \ldots, l\} \\ |B| = b}}\prod_{k \in B} q_k
\]
for $b=1, \ldots, l$; see, e.g., \cite{machado16,leijenhorst2023solving} for the proof.

We may now assume the sum-of-squares polynomials for the three-point and four-point constraints are also invariant under the given action of the symmetric groups $S_3$ and $S_4$. This means that instead of using one large positive semidefinite matrix, we can use several smaller positive semidefinite matrices to model each sum-of-squares polynomial \cite{gatermann04}. To do this explicitly we follow \cite[Section 4]{leijenhorst2023solving}. 

This symmetry reduction involves the irreducible, unitary representations of $S_3$ and $S_4$. Although the irreducible, unitary representations we use involve irrational numbers, the irrationalities cancel in the final formulation, and the semidefinite program we obtain is rational whenever $\cos\theta$ is rational.

\section{Applications to spherical codes}
\label{sec:applications}

\subsection{Optimality and uniqueness of the $D_4$ root system}

In this section, we prove that the $D_4$ root system is the unique optimal kissing configuration in dimension four and is an optimal spherical code.

For this we first compute a numerically optimal solution to \eqref{pr:semialgebraic} with $n=4$ and $\theta=\pi/3$. To get a sharp bound, we use $d_1 = 14$ and $d_2 = \delta = 16$ for the truncation of the inverse Fourier transform and the sums-of-squares degrees. The resulting semidefinite program is large, and to solve it the use of the semidefinite programming solver from \cite{leijenhorst2023solving} is essential. This solver supports arbitrary precision floating-point arithmetic and exploits the low-rank structure of the constraint matrices arising from enforcing the polynomial constraints through sampling at a unisolvent set \cite{lofberg_coefficients_2004}. We compute the optimal solution to $40$ digits of precision using $256$-bit floating-point arithmetic. This takes about two weeks on $8$ cores of a modern computer equipped with $128$GB of working memory. 

The next step is to round the numerical solution to an exact optimal solution. 
Since the dimension of the optimal face is lower than the dimension of the space given by the affine constraints, simply projecting the numerically optimal solution into the affine space does not work: the resulting matrix variables will generally not be positive semidefinite. Instead, we use the recently developed rounding heuristic from \cite{cohn2024optimality}. This method gives a major speedup over the rounding heuristic developed in \cite{MR4263438}, which is crucial for the size of the semidefinite program we consider here. 

Although a semidefinite program defined over the rationals does not necessarily admit a rational optimal solution (see, e.g., \cite{MR2546336}), this is the case here, and the rounding procedure finds a rational optimal solution within $4$ hours. 
This gives an exact feasible solution $K$ with objective value $K(\emptyset, \emptyset) = 24$. 

To verify that the exact solution is indeed feasible we check that the affine constraints hold, which can be done in rational arithmetic, and we check that the solution matrices are positive semidefinite. To be able to check positive semidefiniteness, the rounding procedure writes each solution matrix in the form $B X B^{\sf T}$, where $B$ is a rectangular, rational matrix, and $X$ is a positive definite, rational matrix. We check that $X$ is indeed positive definite by computing the Cholesky decomposition in rigorous ball arithmetic. 
As part of the verification procedure, the zonal matrices $Z_\lambda$ need to be constructed,
which takes less than two days on a modern computer. The remainder of the verification procedure takes less than two hours. 

Using Sturm sequences we verify that the polynomial $p_2$ corresponding to $A_2K|_{I_{=2}}$ has roots $-1$, $ \pm 1/2$, and $0$ in the interval $[-1,1/2]$. That is, for distinct $x, y \in S^3$ with $\langle x, y\rangle \leq 1/2$, $A_2K(\{x,y\}) = 0$ if and only if $\langle x, y\rangle \in \{-1, \pm 1/2, 0\}$. In the remainder of this section, we use this fact to show the $D_4$ root system is an optimal spherical code and is the unique optimal kissing configuration up to isometry.

The script and data files to perform this verification procedure are available at \cite{data2024}. There we also make available the implementation we used for generating the proofs. Our scripts are written in Julia \cite{bezanson17} and use the Nemo computer algebra system \cite{fieker17}.

\begin{lemma}\label{lem:innerproductsincode}
    If $C \subseteq S^3$ is a subset of size $24$ with  minimal angle at least $\pi/3$, then
    \[
        \langle x, y\rangle \in \{ -1, -1/2 , 0, 1/2\}
    \]
    for all distinct $x, y \in C$.
\end{lemma}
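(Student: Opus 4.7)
The plan is to prove this by the standard complementary slackness argument for the Lasserre hierarchy, using the fact that the rounded exact solution $K$ is feasible with $K(\emptyset,\emptyset)=24$ and that the two-point polynomial $p_2$ has been certified to have roots only at $\pm 1$, $\pm 1/2$, $0$ in $[-1,1]$.

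First, I would establish the weak-duality inequality $K(\emptyset,\emptyset)\ge |C|$ for any kissing configuration $C$. Because $K$ is a positive kernel, the finite matrix $\bigl(K(J_1,J_2)\bigr)_{J_1,J_2\subseteq C,\,|J_i|\le 2}$ is positive semidefinite, so the sum of its entries against the all-ones vector is nonnegative:
\[
\sum_{\substack{J_1,J_2\subseteq C\\|J_1|,|J_2|\le 2}} K(J_1,J_2)\;\ge\;0.
\]
Grouping the terms according to their union $Q=J_1\cup J_2\in\Ii_4$ and using the definition of $A_2K$, this becomes
\[
K(\emptyset,\emptyset)+\sum_{\substack{Q\subseteq C\\1\le|Q|\le 4}}A_2K(Q)\;\ge\;0.
\]
Invoking the pointwise constraints $A_2K(Q)\le -1_{\Ii_{=1}}(Q)$ for $Q\neq\emptyset$ gives
\[
K(\emptyset,\emptyset)-|C|+\sum_{\substack{Q\subseteq C\\2\le|Q|\le 4}}A_2K(Q)\;\ge\;0,
\]
and since every summand in the trailing sum is nonpositive, we obtain $K(\emptyset,\emptyset)\ge |C|$.

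Second, I would apply complementary slackness. Since $|C|=24=K(\emptyset,\emptyset)$, every inequality above must be an equality; in particular $A_2K(Q)=0$ for all $Q\subseteq C$ with $2\le|Q|\le 4$. Specializing to $|Q|=2$ gives $p_2(\langle x,y\rangle)=0$ for all distinct $x,y\in C$. Because $C$ has minimal angle at least $\pi/3$ and $x\ne y$, the inner product lies in $[-1,1/2]$; the Sturm-sequence verification already established that the only roots of $p_2$ in this interval are $-1$, $-1/2$, $0$, and $1/2$, yielding the claim.

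The only real subtlety is the initial step that a positive kernel yields a PSD sampled matrix (and hence a nonnegative entry sum), together with correctly matching up the ``sum over pairs of subsets'' with the ``sum of $A_2K$ over unions''; both are routine once the definitions are unfolded. The rest is a direct invocation of the roots of $p_2$ certified by the verification code.
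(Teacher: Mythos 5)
Your proof is correct and takes essentially the same route as the paper: positivity of $K$ gives a nonnegative sampled sum, regrouping by unions converts it to a sum of $A_2K(Q)$, the pointwise constraints give weak duality $K(\emptyset,\emptyset)\ge|C|$, and complementary slackness forces $A_2K(Q)=0$ for $2\le|Q|\le 4$, so the two-point values are roots of $p_2$. The only (harmless) difference is that you invoke the minimal-angle condition to restrict to $[-1,1/2]$ in the final step, whereas the paper simply notes that $x\ne y$ already excludes the root at $+1$.
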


\begin{proof}
    Let $K$ be the exact solution discussed above. By positivity of $K$ and by the linear constraints
    \[
    A_2 K(Q) \leq -1_{\Ii_{=1}}(Q),\qquad Q \in \Ii_4\setminus\{\emptyset\},
    \]
    we have
    \[
        0 \leq \sum_{\substack{J_1,J_2 \in \mathcal{I}_2\\J_1,J_2 \subseteq C}} K(J_1,J_2) = \sum_{\substack{Q \in \mathcal{I}_4\\ Q \subseteq C}} A_2K(Q) \leq K(\emptyset, \emptyset) - |C|.
    \]
    Since $|C| = K(\emptyset, \emptyset) = 24$,  equality holds throughout, so in particular, $A_2K(Q) = 0$ for all $Q \subseteq C$ with $|Q| \leq 4$. As mentioned above, for distinct $x, y \in S^3$, we have $A_2K(\{x,y\}) = 0$ if and only if $\langle x, y \rangle \in \{-1, \pm 1/2, 0\}$, which proves the lemma.
\end{proof}

This shows the $D_4$ root system corresponds to an optimal spherical code: among the $24$-point subsets of $S^3$, the minimal distance between distinct points is as large as possible.

\begin{theorem}
The $D_4$ root system is an optimal spherical code.
\end{theorem}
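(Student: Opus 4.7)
The plan is to show that every $24$-point subset $C \subseteq S^3$ satisfies $t_{\max}(C) \ge 1/2$. Combined with the fact that $D_4$ has $24$ elements and $t_{\max}(D_4) = 1/2$, this yields that the $D_4$ root system is an optimal spherical code.

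I would argue by contradiction. Suppose $|C| = 24$ and $t_{\max}(C) < 1/2$. Since $1/2 = \cos(\pi/3)$, the minimal angle of $C$ strictly exceeds $\pi/3$, and in particular $C$ satisfies the hypothesis of Lemma~\ref{lem:innerproductsincode}. That lemma places every inner product between distinct elements of $C$ in the set $\{-1, -1/2, 0, 1/2\}$, and the strict inequality $t_{\max}(C) < 1/2$ rules out the value $1/2$, leaving all such inner products in $\{-1, -1/2, 0\}$.

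To finish, I would analyze the Gram matrix $G$ of $C$: a $24 \times 24$ positive semidefinite matrix of rank at most $4$, with diagonal entries equal to $1$ and off-diagonal entries in $\{-1, -1/2, 0\}$. Let $a$ denote the number of unordered pairs $\{x, y\} \subseteq C$ with $\langle x, y\rangle = -1$, and $b$ the number with $\langle x, y\rangle = -1/2$. Positivity of $\mathbf{1}^{\sf T} G \mathbf{1}$ reads $24 - 2a - b \ge 0$, so $2a + b \le 24$. On the other hand, the rank-$4$ bound together with the Cauchy--Schwarz inequality applied to the nonzero eigenvalues of $G$ gives $(\mathrm{Tr}\, G)^2 \le 4\,\mathrm{Tr}(G^2)$, which expands to $576 \le 4(24 + 2a + b/2)$, i.e.\ $4a + b \ge 240$. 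Subtracting yields $2a \ge 216$, so $a \ge 108$; but each point of $C$ lies in at most one antipodal pair, so $a \le 12$, a contradiction.

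The only delicate step is the use of the strict inequality $t_{\max}(C) < 1/2$ to excise the value $+1/2$ from the discrete list produced by Lemma~\ref{lem:innerproductsincode}, thereby forcing all off-diagonal Gram entries to be nonpositive; once this is achieved, the clash between the rank-$4$ constraint and the nonnegativity of the row sums is routine. As an alternative finish, one could invoke the classical fact that in $\R^n$ at most $2n$ unit vectors can have pairwise inner products $\le 0$, which immediately gives $24 \le 8$ and closes the argument.
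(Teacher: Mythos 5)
Your proof is correct, but it takes a genuinely different route from the paper's. The paper argues by perturbation: if $C$ had $24$ points with minimal angle strictly larger than $\pi/3$, then any sufficiently small perturbation of one point of $C$ would still be a $24$-point kissing configuration, yet generically have an inner product outside the discrete set $\{-1,-1/2,0,1/2\}$ forced by Lemma~\ref{lem:innerproductsincode} --- a contradiction with no further computation needed. You instead apply the lemma directly to $C$ itself, use the strict inequality $t_{\max}(C) < 1/2$ to remove $+1/2$ from the list, and then derive a contradiction from the resulting sign structure: a rank-$\le 4$ positive semidefinite Gram matrix with unit diagonal and off-diagonal entries in $\{-1,-1/2,0\}$. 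Your arithmetic checks out ($2a+b \le 24$ from $\mathbf{1}^{\sf T}G\mathbf{1}\ge 0$, $4a+b\ge 240$ from $(\operatorname{Tr}G)^2 \le 4\operatorname{Tr}(G^2)$, hence $a\ge 108 > 12$), and your alternative closing via the classical fact that at most $2n$ unit vectors in $\R^n$ can be pairwise nonpositively correlated is even cleaner ($24 > 8$). Both arguments buy different things: the paper's perturbation argument is shorter and exploits the rigidity implied by a finite inner-product spectrum without unpacking it, while yours is more elementary and self-contained, making the linear-algebraic obstruction fully explicit; it would still work if the lemma had only ruled out positive inner products rather than pinning them to a finite set.
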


\begin{proof}
If there were a spherical code $C$ of cardinality $24$ with smallest angle strictly larger than $\pi/3$, then any small enough perturbation of $C$ would correspond to a kissing configuration of size $24$, which contradicts with Lemma~\ref{lem:innerproductsincode}.
\end{proof}
Note that for cases where the Bachoc-Vallentin three-point bound is sharp, optimality of the corresponding spherical code follows immediately, because in that case sharpness directly implies that there are only finitely many possible inner products; see \cite{MR2546336}. We do not know whether the same is always true for a truncation of the Lasserre hierarchy, since it is not clear whether the polynomial $p_2$ can be identically zero when the bound is sharp.

\begin{theorem}\label{thm:unique}
    The $D_4$ root system is the unique optimal kissing configuration in $\R^4$ up to isometry.
\end{theorem}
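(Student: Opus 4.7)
The plan leverages information from the optimal $K$ beyond what is used in Lemma~\ref{lem:innerproductsincode}: namely, $A_2 K(Q) = 0$ for every $Q \subseteq C$ with $|Q| \le 4$. In particular, the polynomial $p_3$ vanishes at every triple of inner products realized inside $C$, and $p_4$ at every realized quadruple. Since these inner products lie in the finite set $\{-1, -\tfrac{1}{2}, 0, \tfrac{1}{2}\}$, this places rigid combinatorial constraints on local configurations, which I will use first to pin down the distance distribution of $C$ and then to reconstruct the configuration up to isometry.

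I would first fix the distance distribution. Let $N_s$ count ordered pairs of distinct points in $C$ with inner product $s$, so $\sum_s N_s = 24\cdot 23$. Gegenbauer positivity on $S^3$, namely $\sum_{x,y \in C} P_k(\langle x, y\rangle) \ge 0$ for $k = 1, 2, 3$, together with the vanishing of $p_3$ and $p_4$ on realized triples and quadruples, yields a linear system whose unique nonnegative solution is $(N_{-1}, N_{-1/2}, N_0, N_{1/2}) = (24, 192, 144, 192)$, matching $D_4$. In particular $N_{-1} = 24$ shows that every point of $C$ has its antipode in $C$.

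Next, the quotient $L := C/\{\pm 1\}$ is a set of $12$ lines in $\R^4$ with pairwise angles in $\{60^\circ, 90^\circ\}$; each line makes $60^\circ$ with $8$ others and is perpendicular to $3$. Fix $\ell_0 \in L$ with unit direction $x_0$. The three lines perpendicular to $\ell_0$ lie in $V := x_0^\perp \cong \R^3$; a short case analysis, using that any pair of these meeting at $60^\circ$ would clash with the global degree sequence imposed on the remaining $8$ lines, forces them to be pairwise orthogonal, yielding an orthonormal frame $\{e_1, e_2, e_3\}$ of $V$. Each of the eight $60^\circ$-neighbors of $\ell_0$ has direction vector of the form $\tfrac{1}{2} x_0 + \tfrac{\sqrt{3}}{2} w$ with $w$ a unit vector in $V$, and the constraints on $\langle w, e_i\rangle$ and pairwise $\langle w, w'\rangle$ imposed by the allowed inner product set determine these eight vectors uniquely up to signed permutation of the $e_i$, recovering the $D_4$ root system in the basis $\{x_0, e_1, e_2, e_3\}$.

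The main obstacle is the first step, converting the local vanishing of $p_3$ and $p_4$ into rigid integer intersection numbers rather than just the soft Gegenbauer inequalities. If the direct combinatorial bookkeeping proves cumbersome, a cleaner alternative is to recognize the $\tfrac{1}{2}$-graph on $C$ (edges $\{x,y\}$ with $\langle x,y\rangle = \tfrac{1}{2}$) as a specific strongly regular graph---the edge graph of the $24$-cell---whose unique realization on $S^3$ then immediately gives isometry with $D_4$.
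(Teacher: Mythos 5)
Your proposal takes a genuinely different route from the paper's proof, but several of its key steps are either wrong or have real gaps.

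The paper's actual proof first shows that $C$ is a root system: it verifies the four root system axioms, with the crucial closure-under-reflections step following from Lemma~\ref{lem:innerproductsincode} and the fact that a reflection of $\beta\in C$ through $\alpha^\perp$ has inner product in $\{\pm 1,\pm 1/2,0\}$ with every $\gamma\in C$ (so that, if it were not already in $C$, it could be appended to give a kissing configuration of size $25$). Then the classification of irreducible root systems with roots all of one length, together with the elementary count $r_i/d_i < 6$ for $A_j$ ($j\le 4$) and $D_j$ ($j\le 3$), forces a $D_4$ summand. The paper also sketches (after the proof) an alternative via line systems that uses antipodality and then cites \cite[Proposition 3.12]{calderbank1997Z4} to get that the $12$ lines split as three orthonormal bases pairwise non-orthogonal to each other.

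Your first step---pinning down the distance distribution from Gegenbauer positivity with $k\le 3$ plus the vanishing of $p_3,p_4$---does not work as stated. The Gegenbauer inequalities for $k=1,2,3$ together with $\sum_s N_s = 552$ admit many nonnegative solutions other than $(24,192,144,192)$: for instance $(N_{-1},N_{-1/2},N_0,N_{1/2})=(22,192,138,200)$ satisfies all of them (the $k=2$ and $k=3$ constraints are tight and the $k=1$ constraint gives $6\ge 0$). One must at minimum include $k=4$ to eliminate such perturbations, and you give no argument for this. More fundamentally, the vanishing of $p_3$ and $p_4$ on realized triples and quadruples does not translate into linear constraints on the $N_s$ in any straightforward way, and even a uniquely determined global distance distribution would not give the per-vertex regularity (each point has exactly $1/8/6/8$ neighbours at $-1,\pm 1/2,0$) that your line-system argument silently assumes. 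Your ``short case analysis'' forcing the three perpendicular lines to be pairwise orthogonal is also unjustified; this is exactly the nontrivial structural step that the paper handles by citing \cite{calderbank1997Z4} rather than rederiving it.

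Your proposed shortcut---recognising the $1/2$-graph as ``a strongly regular graph, the edge graph of the $24$-cell''---is wrong. The $1/2$-graph of $D_4$ (the $1$-skeleton of the $24$-cell) has diameter $3$: antipodal vertices share no common neighbours while orthogonal non-adjacent vertices share four, so $\mu$ is not constant and the graph is not strongly regular. This route cannot be repaired simply by recognising a standard SRG. If you want to avoid the root-system argument, the viable alternative is the one the paper sketches: derive antipodality (which follows easily from reflection closure, or could be established independently), then invoke the Calderbank--Cameron--Kantor--Seidel characterisation of line systems in $\R^4$ with angles in $\{60^\circ,90^\circ\}$, and finally reconstruct the $16$ non-basis points from the coordinate constraints of Lemma~\ref{lem:innerproductsincode}.
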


\begin{proof}
    Let $C \subseteq S^3$ be an optimal kissing configuration in $\R^4$.
    We first verify that $C$ is a root system.
    \begin{enumerate}
        \item  The vectors in $C$ must span $\R^4$, since otherwise $C$ would give a kissing configuration in $\R^3$ of size $24$.
    \item Since $C$ is a subset of the unit sphere, the only scalar multiples of $\alpha \in C$ can be $\alpha$ and $-\alpha$.
    \item Let $\alpha, \beta \in C$ and consider the reflection 
    $
    \beta' = \beta - 2 \langle \alpha, \beta \rangle \alpha
    $
    of $\beta$ through the hyperplane orthogonal to $\alpha$. By Lemma~\ref{lem:innerproductsincode}, it follows that   
    \[
    \langle \beta', \gamma\rangle \in \{\pm 1, \pm 1/2, 0 \}
    \]
    for every $\gamma \in C$. 
    So, $\beta'$ must be in $C$ by optimality of $C$.
    \item By Lemma~\ref{lem:innerproductsincode}, for $\alpha, \beta \in C$, the value $2 \langle \alpha, \beta \rangle$ is an integer. In other words, the reflection of $\beta$ through the hyperplane orthogonal to $\alpha$ is obtained by subtracting an integer multiple of $\alpha$ from $\beta$.
    \end{enumerate}
    Hence, the set $C$ is a root system in $\R^4$.

    The irreducible root systems have been classified, and the only irreducible root systems where all vectors have the same length are $A_j$, $D_j$, $E_6$, $E_7$, and $E_8$; see, for instance, \cite[Table 4.1]{sloane81}. Since all roots in $C$ have the same length, it must be a direct sum of these irreducible root systems. In other words, 
    \[
    C = \bigoplus^k_{i=1} \Phi_i
    \]
    for some $k$ and root systems $\Phi_1,\dots,\Phi_k$, where each $\Phi_k$ is isomorphic to  $A_j$, $D_j$, $E_6$, $E_7$, or $E_8$.
    
    Let us assume that $D_4$ does not occur in the decomposition. 
    By considering the dimensions, the summands must be isomorphic to $A_j$ with $1 \leq j \leq 4$ and $D_j$ with $1 \leq j \leq 3$.
    We denote by $r$ the total number of roots occurring in $C$, by $r_i$ the number of roots of $\Phi_i$, and by $d_i$ the rank of $\Phi_i$.
    For $A_j$ we have $r_j/d_j = j + 1$ and for $D_j$ we have $r_j/d_j = 2(j - 1)$.
    Hence, we have $r_i/d_i < 6$ for the root systems which occur in the decomposition.
    Furthermore, since the span of $C$ is $\R^4$, we have $\sum_{i = 1} d_i = 4$.
    We then have
    \[
        r = \sum_{i=1}^k r_i = \sum_{i=1}^k \frac{r_i}{d_i} d_i < 6 \sum_{i = 1} d_i = 24.
    \]
    Since the number of roots in $C$ is equal to $24$, this gives a contradiction.
    Hence, $C$ is $D_4$ up to orthogonal transformations.
\end{proof}

We sketch an alternative proof of Theorem~\ref{thm:unique} which does not rely on the classification of irreducible root systems. Consider an optimal spherical code. As argued in the above proof, the code is antipodal. Now \cite[Proposition~3.12]{calderbank1997Z4} implies that the corresponding set of $12$ lines is the union of $3$ orthonormal bases, with lines in different bases not orthogonal. Choose one of these bases to define coordinates. The basis elements and their negatives give $8$ points. By Lemma~\ref{lem:innerproductsincode}, the remaining 16 must have $\pm 1/2$ in every coordinate. The only possibility is to have every such point, and so the resulting configuration is unique. 

\subsection{New bound in dimension six}

We also computed the second level of the hierarchy with the parameters $d_1=14$ and $d_2 = \delta=16$ for the kissing number problem in dimensions $5$, $6$, $7$, $10$, $12$, and $16$. In dimension $6$ this gives $k(6) \leq 77$, which improves on the previously best-known upper bound of $k(6) \leq 78$ obtained using the three-point bound \cite{bachoc08}. As mentioned in the introduction, the degrees for which we perform computations are not yet high enough to get improvements in the other dimensions.

In dimension $6$, this does not give a sharp bound, so that the optimal objective value and optimal solution potentially require high algebraic degree or bit size. This means the rounding procedure from \cite{cohn2024optimality} may not be able to find an exact feasible solution here. Therefore, we solve the problem as a feasibility problem, where we add the constraint that the objective $K(\emptyset, \emptyset)$ is equal to $77.85$. Since this is strictly larger than the numerically computed optimal objective, the solver will return a strictly feasible solution (a feasible solution where all matrix variables are positive definite), from which it is easy to extract an exact feasible solution. This gives a rigorous proof of $k(6) \leq 77$. Again, the verification script and data files are available at \cite{data2024}.

\section*{Acknowledgements}

We are grateful to Henry Cohn and Fernando Oliveira for helpful discussions. We also thank  Oliveira for letting us use his unpublished Julia code for invariant sum-of-squares polynomials, and we thank Cohn for making us aware of the alternative proof below Theorem~\ref{thm:unique}. We thank Joffrey Wallaart for ICT support for the computational part of this project.

\providecommand{\bysame}{\leavevmode\hbox to3em{\hrulefill}\thinspace}
\providecommand{\MR}{\relax\ifhmode\unskip\space\fi MR }
\providecommand{\MRhref}[2]{%
  \href{http://www.ams.org/mathscinet-getitem?mr=#1}{#2}
}
\providecommand{\href}[2]{#2}

\end{document}